\documentclass[12pt,draft]{amsart}
\usepackage{graphicx} 

\usepackage[all]{xy}
\usepackage{amsfonts}
\usepackage{amssymb}

\usepackage{enumerate}

\usepackage{color}
\definecolor{purple}{RGB}{128,0,128}

\newtheorem{teo}{Theorem}[section]
\newtheorem{lem}[teo]{Lemma}

\newtheorem{cor}[teo]{Corollary}

\theoremstyle{definition}
\newtheorem{dfn}[teo]{Definition}
\newtheorem{rk}[teo]{Remark}

\def\<{\langle}
\def\>{\rangle}

\def\a{\alpha}

\def\d{\delta}
\def\e{\varepsilon}
\def\l{{\lambda}}

\def\f{{\varphi}}

\def\F{{\Phi}}

\def\bK{{\mathbf{K}}}
\def\bBK{{\mathbf{BK}}}
\def\bL{{\mathbf{L}}}

\def\A{{\mathcal A}}

\def\M{{\mathcal M}}
\def\cN{{\mathcal N}}
\def\Hom{\mathop{\rm Hom}\nolimits}
\def\End{\mathop{\rm End}\nolimits}
\def\Ker{\mathop{\rm Ker}\nolimits}
\def\Im{\mathop{\rm Im}\nolimits}

\def\dist{\mathop{\rm dist}\nolimits}

\def\Span{\mathop{\rm Span}\nolimits}

\def\1{\mathbf 1}

\title[Banach-compact operators, precompactness and frames]{Banach-compact operators, $\A$-precompactness, and frames in Hilbert $C^*$-modules}
\author{Denis Fufaev}
\author{Evgenij Troitsky}
\thanks{This work is
supported by the Russian Foundation for Basic Research
under grant 24-11-00124.}
\address{Moscow Center for Fundamental and Applied Mathematics,
Dept. Mech. and Math., 
	Lomonosov Moscow State University, 119991 Moscow, Russia}
\email{denis.fufaev@math.msu.ru, fufaevdv@rambler.ru}
\email{troitsky@mech.math.msu.su}

\keywords{Hilbert $C^*$-module}
\subjclass[2010]{46L08; 47B10; 47L80; 54E15}

\begin{document}
\begin{abstract}
For a couple $\M$, $\cN$ of Hilbert $C^*$-modules over a $C^*$-algebra $\A$, one has two notions of ``$\A$-rank 1 operators'':
$\theta_{x,y}:\M\to\cN$, $\theta_{x,y}(z)=x\<y,z\>$, where $y,z\in\M$, $x\in\cN$, (called elementary $\A$-compact, or elementary Kasparov, operators) and 
$\theta_{x,f}:\M\to\cN$, $\theta_{x,f}(z)=xf(z)$, where $z\in\M$, $x\in\cN$, and $f$ is a bounded $\A$-functional on $\M$ (introduced by Manuilov). They generate a $C^*$-bimodule $\bK(\M,\cN)$ ($\A$-compact operators) over the $C^*$-algebras of adjointable operators and a Banach bimodule $\bBK(\M,\cN)$ (Banach-compact operators)
over the algebras of all bounded morphisms, respectively.

In order to give a geometrical characterization of these classes of operators, we introduce the notion of $\A$-compactness (developing the one introduced by Manuilov). Banach-compact operators can be characterized as those with $\A$-precompact image of the unit ball. Another obtained characterization is in terms of total boundedness of this set relatively the uniform structure introduced by one of us previously.
The constructions and proofs turn out to be closely related to the concept of frame in a Hilbert $C^*$-module.
\end{abstract}

\maketitle

\section{Introduction}

In the theory of Hilbert spaces, a well-known criterion states that an operator is the norm limit of finite-rank operators if and only if the image of the unit ball is precompact, i.e. totally bounded with respect to the norm. This criterion fails for Hilbert $C^*$-modules: for any infinite-dimensional unital $C^*$-algebra $\A$, the identity operator on $\A$ is of finite rank (in fact, rank one), yet the unit ball is not totally bounded in norm. Consequently, a natural question arises: can the approximability of operators by finite-rank operators on Hilbert $C^*$-modules be characterized via the geometric properties of the image of the unit ball?
Note that in the module context one must distinguish between two notions of finite-rank operators: those defined via arbitrary bounded $\A$-linear functionals (which lead us to the definition of Banach-compact operators) and a more restrictive class defined via inner products with module elements (which lead us to the definition of $\A$-compact operators).

In \cite{KeckicLazovic2018}, \cite{lazovic2018}, certain uniform structures, given by families of seminorms on the range module, were introduced and used to obtain results on $\A$-compact operators for particular classes of modules. In \cite{Troitsky2020JMAA}, a new system of seminorms was proposed, and for operators with countably generated range a criterion for 
$\A$-compactness of self-adjoint operators was established. This was extended in \cite{TroitFuf2020} to modules admitting a standard frame, equivalently, to modules that are direct summands of standard modules. In the paper \cite{FufTro2024UniformStr}, more general versions of the corresponding uniform structure were constructed, and the same equivalence was proved for countably generated modules.

However, in \cite{Fuf2021faa, Fuf2022Path, Fuf2023, Fuf2025} it was shown that such an equivalence fails in greater generality (i.e. for modules without standard frames). This failure already occurs for the simplest example of a module, namely a commutative $C^*$-algebra considered as a module over itself. In that setting, an appropriate classification of topological spaces was obtained, and it was shown that the criterion does not hold for certain classes of spaces and the corresponding algebras. Several examples were constructed, and part of the theory was extended to the noncommutative setting.

A parallel approach was developed for the broader class of Banach-compact operators, where $\A$-compact operators appear precisely as the adjointable ones. In \cite[Proposition 2.3]{m_bratisl}, the classical idea of compactness as an approximation by finite-dimensional subspaces was generalized: an $\A$-precompact subset of a standard module over a unital $C^*$-algebra $\A$ was defined as one that can be norm-approximated by free finitely generated submodules. In this setting, it was proved that an endomorphism of a standard module is Banach-compact if and only if the image of the unit ball is $\A$-precompact.
Nevertheless, the requirement that the approximating submodules are free is overly restrictive for arbitrary modules. The purpose of this paper is to overcome this limitation and obtain an analogous criterion for Hilbert $C^*$-modules of general type.

Namely, for a general module, we must abandon the condition that the generators of the approximating submodule are free. Instead, we introduce the condition that the coefficients in the linear combinations of generators approximating the considered set are uniformly bounded on this set (for fixed $\e>0$).
This restriction yields a property that allows us to prove the Banach-compactness criterion for an arbitrary module. This property also turns out to be equivalent to the analogue of another characterization of precompact sets in Hilbert spaces: a set is precompact if and only if the identity operator can be uniformly approximated by operators of finite rank on this set.

The main results are the following: we introduce the notion of $\A$-precompact subsets for general Hilbert $C^*$-modules (Theorem \ref{thm:setcomp} and Definition \ref{dfn:setcomp}) and prove that an operator is Banach-compact if and only if the image of the unit ball is $\A$-precompact (Theorem \ref{thm:opercomp}). Then we prove that this notion of $\A$-precompactness is indeed a generalization of the old one (Theorem \ref{thm_old}), compare this result with those which were obtained via uniform structures (Theorem \ref{thm:totb_comp} and Corollary \ref{cor:totb_comp}) and construct an example showing that even in the case of standard module over a unital algebra the condition that a subset can be approximated by finite-rank submodules is not enough (Theorem \ref{ex_bad}). We also obtain that any Banach-compact operator can be represented as the sum of a series of operators of finite rank (Corollary \ref{cor_series}).

\section{Preliminaries}

We briefly recall the notions of Hilbert $C^*$-modules, Banach-compact and $\A$-compact operators, and standard frames that will be used. At the end of the section we also recall the uniform structure from the earlier approach, which will be needed for comparison in Theorems \ref{thm:totb_comp} and \ref{cor:totb_comp}.

The following
basic notions and facts one can find in 
\cite{Lance,MTBook,ManuilovTroit2000JMS}.

\begin{dfn}
\rm
A (right) pre-Hilbert $C^*$-module over a $C^*$-algebra $\A$
is an $\A$-module equipped with an $\A$-\emph{inner product}
$\<.,.\>:\M\times\M\to \A$ being a sesquilinear form on the
underlying linear space such that, for any $x,y\in\M$, $a\in\A$ :
\begin{enumerate}
\item $\<x,x\> \ge 0$;
\item $\<x,x\> = 0$ if and only if $x=0$;
\item $\<y,x\>=\<x,y\>^*$;
\item $\<x,y\cdot a\>=\<x,y\>a$.
\end{enumerate}

A \emph{Hilbert $C^*$-module} is a pre-Hilbert $C^*$-module over $\A$  which 
is complete w.r.t. its norm $\|x\|=\|\<x,x\>\|^{1/2}$.

A pre-Hilbert $C^*$-module $\M$ is called \emph{countably generated}
if there is a countable collection of its elements such that 
 their $\A$-linear combinations are dense in $\M$.

The \emph{Hilbert sum} of Hilbert
$C^*$-modules in the evident sense will be denoted by $\oplus$.

Denote by $\ell^J_2(\A)$ the 
``standard module of cardinality $J$''
i.e. the module $\bigoplus\limits_{j\in J}\A$, where $\A$ is considered as $\A$-module with inner product $\<a,b\>=a^*b$ .
In the case when $J=\mathbb N$, i.e. there is a countable set of copies of $\A$, it is denoted by $\ell_2(\A)$.

Also define the \emph{dual module} $\M'=\Hom_\A(\M,\A)$ as the set of all bounded $\A$-linear functionals, which is equipped with right $\A$-module structure $(fa)(\cdot)=a^*f(\cdot)$, $f\in\M'$, $a\in\A$.
\end{dfn}

We will say that a submodule $L$ of a Hilbert $C^*$-module $\M$ over $\A$ is a free submodule if $L\cong \A^n$ for some $n\in\mathbb N$. In the case, when $\A$ is unital, one can prove that this means that there exist $e_1,\dots,e_n\in L$ such that $\<e_i,e_j\>=\d_{i,j}\cdot1_{\A}$ and $L=\Span_\A(e_1,\dots,e_n)$.

For any $C^*$-algebra $\A$, we denote by $\dot{\A}$ its unitalization, which we assume to be equal to $\A$ if $\A$ is unital.

\begin{dfn}\label{dfn:operators}
\rm
By an \emph{operator} we mean a bounded $\A$-linear map.
An operator having an adjoint (in an evident sense) is 
\emph{adjointable} (see \cite[Section 2.1]{MTBook}).
We will denote the Banach space of all operators
$F: \M\to \cN$ by  ${\mathbf{L}}(\M,\cN)$
and the Banach space of adjointable operators
by ${\mathbf{L}}^*(\M,\cN)$.  
\end{dfn}


\begin{dfn}\label{dfn:Acompact}
An \emph{elementary} \emph{Banach-compact} operator
$\theta_{x,f}:\M\to\cN$, where $x\in\cN$ 
and $f\in\M'$, is defined as $\theta_{x,f}(z):=x\cdot f(z)$.
Then the Banach space $\bBK(\M,\cN)$
of \emph{Banach-compact} operators
is the closure of the subspace generated by all 
elementary Banach-compact operators in $\bL(\M,\cN)$.
If we take only functionals of the form $\<y,z\>$ for some $y\in\M$, then we obtain the definitions of an elementary $\A$-compact operator and corresponding Banach space $\bK(\M,\cN)$ of $\A$-compact operators.

Finite sums of elementary operators are called \emph{finite rank operators} over $\A$.
\end{dfn}

Note that $T\theta_{x,f}=\theta_{Tx,f}$ if $T\in \bL(\cN_1,\cN_2)$,
and $\theta_{x,f}S=\theta_{x,f\circ S}$ if $S\in \bL(\M_1,\M_2)$.
In particular, this implies that $\bBK(\M,\M)$ is a (closed two-sided) ideal in $\bL(\M,\M)$.

\begin{dfn}\label{dfn:fr}
Let $\cN$ be a Hilbert $C^*$-module over a $C^*$-algebra $\A$ and $J$ be some index set. A family $\{x_j\}_{j\in J}$ of elements of $\cN$ is said to be a \emph{standard frame} in $\cN$ if there exist positive constants $c_1, c_2$ such that for any $x\in\cN$ the series $\sum\limits_j \<x,x_j\>\<x_j,x\>$
converges in norm in $\A$ (in particular only countably many elements of the series are not zero) and the following inequalities hold:
$$c_1\<x,x\>\le \sum\limits_j \<x,x_j\>\<x_j,x\>\le c_2\<x,x\>.$$
\end{dfn}

\begin{rk}\label{rk:stab_fr}
One of the most important structural results of the frame theory is the following: from the results of Frank and Larson (Example 3.5, Theorems 5.3 and 4.1 in \cite{FrankLarson2002}, see also Theorem 1.1 in \cite{HLi2010}) it follows that a Hilbert $C^*$-module
$\cN$ over a $C^*$-algebra $\A$ can be represented as an orthogonal direct summand in the standard module of some cardinality $\ell^J_2(\dot{\A})=\bigoplus\limits_{j\in J}\dot{\A}$ over the unitalization algebra if and only if there exists a standard frame in $\cN$.
\end{rk}

If a Hilbert $C^*$-module $\cN$ over a $C^*$-algebra $\A$ is countably generated, then it always has a frame. Indeed, in this case $\cN$ is countably generated as an $\dot{\A}$-module too and by Kapsarov stabilization theorem (\cite{Kasp}, see also \cite[Theorem 1.4.2]{MTBook}) there exists an isomorphism $\cN\oplus\ell_2^{}(\dot{\A})\cong \ell_2^{}(\dot{\A})$, so $\cN$ has a countable frame as an $\dot{\A}$-module which obviously is also a frame in $\cN$ as an ${\A}$-module.

For a standard frame $\{x_j\}_{j\in J}$ the \emph{frame operator} $\Theta:\cN\to\ell_2^J(\A)$ and its adjoint $\Theta^*$ are defined by the formulas 
\begin{equation}\label{eq:ThetaAndTheta*}
\Theta(x)=(\<x_j,x\>)_{j\in J}\qquad \mbox{ and }\qquad \Theta^*((a_j)_{j\in J})=\sum\limits_{j\in J}x_j a_j    
\end{equation}
 (see \cite[Sections 4 and 6]{FrankLarson2002} for details).

The \emph{canonical dual frame} is defined by formula $g_j=(\Theta^*\Theta)^{-1}x_j$.

\begin{rk}\label{rk:fr_appr}
For any element $x\in\cN$ only for at most countably many $j\in \widetilde{J}\subset J$ we have that $\<x,x_j\>\ne0$. This also holds for any $y\in\overline{\Span_\A\{x\}}$, so $\{x_j\}_{j\in\widetilde{J}}$ is a standard frame for (at most countably generated) module $\overline{\Span_\A\{x\}}$. So by \cite[Theorem 6.1]{FrankLarson2002} the reconstruction formula 
$x=
\sum\limits_{j\in{J}}x_j\<g_j,x\>=
\sum\limits_{j\in\widetilde{J}}x_j\<g_j,x\>
$ holds.
In particular, any standard frame is a set of generators for any Hilbert $C^*$-module.
\end{rk}

We will use the following simple observation.

\begin{lem}\label{lem:countcountframe}
    Let $\M$ have a countable standard frame $x_i$. Then any other its frame $\{y_\a\}$,
    $\a\in \mathfrak{A}$, is also countable
    (more precisely, has not more than countably many non-zero elements).
\end{lem}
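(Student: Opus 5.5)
We use the frame inequality for $\{y_\a\}$ at each element $x_i$ of the countable frame, and then the reconstruction formula for $\{x_i\}$ (Remark \ref{rk:fr_appr}) to see that a $y_\a$ orthogonal to all $x_i$ must vanish.

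First, fix $i$ and apply the definition of a standard frame (Definition \ref{dfn:fr}) for $\{y_\a\}$ to the element $x=x_i$. By definition the series $\sum_\a \<x_i,y_\a\>\<y_\a,x_i\>$ converges in norm in $\A$. As noted in that definition, this forces only countably many terms to be nonzero: for each $n$, the partial sums are bounded, so only finitely many $\a$ can have $\|\<y_\a,x_i\>\|^2\ge 1/n$. Hence the set
$$\mathfrak{A}_i=\{\a\in\mathfrak{A}:\ \<y_\a,x_i\>\ne 0\}$$
is at most countable, and so is $\mathfrak{A}_0=\bigcup_{i\in\mathbb N}\mathfrak{A}_i$.

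Next, take $\a\notin\mathfrak{A}_0$, so that $\<y_\a,x_i\>=0$ for all $i$, and hence $\<x_i,y_\a\>=0$ for all $i$. Apply the frame inequality for $\{x_i\}$ to $x=y_\a$:
$$c_1\<y_\a,y_\a\>\le\sum_i\<y_\a,x_i\>\<x_i,y_\a\>=0 .$$
This gives $\<y_\a,y_\a\>=0$, i.e. $y_\a=0$. (Alternatively, the reconstruction formula $y_\a=\sum_i x_i\<g_i,y_\a\>$ with $g_i=(\Theta^*\Theta)^{-1}x_i$ gives the same conclusion, but the lower frame bound is more direct.) Therefore all nonzero elements of $\{y_\a\}$ are indexed by the countable set $\mathfrak{A}_0$.

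The only point requiring care is the first step: norm convergence of an uncountable sum of positive elements forces countable support. This is standard, since for a norm-convergent net of finite partial sums only finitely many terms can exceed any fixed norm threshold. There is no substantial obstacle.
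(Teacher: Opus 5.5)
Your argument is correct and is essentially the paper's proof: the countable index sets $\mathfrak{A}_i$ where $\<y_\a,x_i\>\ne0$, their countable union, and the lower frame bound of $\{x_i\}$ applied to $y_\a$ outside that union. One caveat: the reasoning ``the partial sums are bounded, so only finitely many $\a$ have $\|\<y_\a,x_i\>\|^2\ge 1/n$'' is not valid in a $C^*$-algebra (e.g.\ infinitely many mutually orthogonal projections have all finite sums of norm $1$). Your closing remark via the Cauchy criterion for the norm-convergent net of finite partial sums is correct, however, and Definition~\ref{dfn:fr} already records the countable support in any case.
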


\begin{proof}
    Each $x_i$ is orthogonal to all $y_\a$ except a countable set $\mathfrak{A}_i$.
    Denote by $\mathfrak{A}'$ the countable set that being the union of all $\mathfrak{A}_i$, $i\in \mathbb N$. We have,
    for $\a\notin \mathfrak{A}'$ and any $i$, $\<x_i,y_\a\>=0$. and hence $y_\a=0$. Indeed, using the definition of a standard frame, we have
    $
    c_1\<y_\a,y_a\>\le\sum_{i\in \mathbb N} \<y_a,x_i\> \<x_i,y_\a\>=0.
    $, where $c_1$ is the lower frame constant of $\{x_n\}_{n\in\mathbb N}$.
    \end{proof}

We recall the previously obtained results describing $\A$-compact operators in order to compare them later with the results established in the present article.
The considered uniform structures on submodules of $\cN$ are defined as follows (see \cite{Fuf2021faa} and \cite{Troitsky2020JMAA} for details).

\begin{dfn}\label{dfn:admissyst}
\rm
Consider a Hilbert $C^*$-module $\cN$ over $\A$. A countable system $X=\{x_{i}\}$ of elements of $\cN$ is called \emph{admissible} for a (possibly not closed) submodule $\cN^0\subseteq \cN$
(or $\cN^0$-\emph{admissible}), if
\begin{enumerate}
\item[1)] 
 for each $x\in\cN^0$
the series $\sum_i \<x,x_i\>\<x_i,x\>$ is convergent in $\A$;
\item[2)]
its sum is bounded by $\<x,x\>$, that is, $\sum_i \<x,x_i\>\<x_i,x\> \le \<x,x\>$; 
\item[3)]$\|x_i\|\le 1$ for any $i$.
\end{enumerate}
\end{dfn}

\begin{dfn}\rm
Denote by $\F$ a countable collection $\{\f_1,\f_2,\dots\}$
of states on $\A$ (i.e. positive linear functionals of norm 1). For each pair $(X,\F)$ 
with a $\cN^0$-admissible $X$,
consider a non-negative function defined by
the equality
$$
\nu_{X,\F}(x)^2:=\sup_k 
\sum_{i=k}^\infty |\f_k\left(\<x,x_i\>\right)|^2,\quad x\in \cN^0. 
$$
It can be checked that this is a seminorm on the module $\cN^0$.
Denote the system of all these functions by 
$\mathbb{SN}(\cN,\cN^0)$. Also we will write $(X,\F)\in 
\mathbb{A}(\cN,\cN^0)$
for pairs with admissible $X$. 
\end{dfn}

\begin{dfn}\label{dfn:pseme}
Consider for $(X,\F)\in \mathbb{A}(\cN,\cN^0)$
the following function
$d_{X,\F}:\cN^0\times \cN^0\to [0,+\infty)$
$$
d_{X,\F}(x,y)^2:=\nu_{X,\F}(x-y)^2=
\sup_k 
\sum_{i=k}^\infty |\f_k\left(\<x-y,x_i\>\right)|^2,\quad x,y\in \cN^0. 
$$
We will write $d_{X,\F}\in \mathbb{PM}(\cN,\cN^0)$. 
\end{dfn}

Evidently,
$d_{X,\F}$ are \emph{pseudometrics} in the sense of  \cite[Definition 2.10]{Troitsky2020JMAA} (and \cite[Chapter IX, \S 1]{BourbakiTop2}), so they form a uniform structure.

The definition of \emph{totally bounded} sets 
for the uniform structure under consideration
(or for the system $\mathbb{PM}(\cN,\cN^0)$) takes the following form.

\begin{dfn}\label{dfn:totbaundset}
\rm
A set $Y\subseteq \cN^0 \subseteq  \cN$ is \emph{totally bounded}
with respect to this
uniform structure, if for any $(X,\F)$, 
where $X \subseteq  \cN$ is $\cN^0$-admissible,
and any 
$\e>0$ there exists a finite collection $y_1,\dots,y_n$
of elements of $Y$ such that the sets
$$
\left\{ y\in Y\,|\, d_{X,\F}(y_i,y)<\e\right\}
$$  
form a cover of $Y$. This finite collection is called an 
$\e$\emph{-net in $Y$ for} $d_{X,\F}$.

If so, we will say that $Y$ is $(\cN,\cN^0)$-\emph{totally bounded}.
\end{dfn}

In these terms the $\A$-compactness of operators for some class of modules can be described as follows:

\begin{teo}\cite[Theorem 3.5]{TroitFuf2020}
Suppose, $\M$, $\cN$, and ${\mathcal K}$ are Hilbert $\A$-modules, $\cN\oplus{\mathcal K}\cong\bigoplus\limits_{\l\in\Lambda}\dot{\A}$ for some index set $\Lambda$ (equivalently, $\cN$ has a standard frame),
$F:\M\to\cN$ is an adjointable operator
and $F(B)$ is $(\cN,\cN)$-totally bounded, where $B$ is the unit ball of $\M$.
Then $F$
is $\A$-compact as an operator from $\M$ to $\cN$.
\end{teo}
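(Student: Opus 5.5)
The plan is to approximate $F$ in norm by compressions of the reconstruction formula through a standard frame of $\cN$. Fix a standard frame $\{x_j\}_{j\in J}$ of $\cN$ (it exists by Remark \ref{rk:stab_fr}), with canonical dual frame $g_j=(\Theta^*\Theta)^{-1}x_j$.

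\emph{Step 1: finite-rank compressions.} For a finite set $I\subset J$ put $P_I(x)=\sum_{j\in I}x_j\<g_j,x\>$. Since $F$ is adjointable,
$$P_IF=\sum_{j\in I}\theta_{x_j,F^*g_j}$$
is a finite sum of elementary $\A$-compact operators. By Remark \ref{rk:fr_appr}, $Fz=\sum_j x_j\<g_j,Fz\>$ for every $z$. Moreover $\Theta^*$ is bounded and $\{g_j\}$ is again a standard frame. Hence it suffices to prove the following uniform tail estimate: for every $\e>0$ there is a finite $I\subset J$ with
$$\Big\|\sum_{j\notin I}\<Fz,g_j\>\<g_j,Fz\>\Big\|<\e\qquad\mbox{for all } z\in B.$$
Indeed, the left-hand side controls $\|Fz-P_IFz\|^2$ up to the constant $\|\Theta^*\|^2$. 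Then $\|F-P_IF\|\to0$, and so $F\in\bK(\M,\cN)$.

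\emph{Step 2: contradiction setup.} Suppose the uniform tail estimate fails for some $\e>0$. I would build inductively points $z_k\in B$ and pairwise disjoint finite sets $I_k\subset J$ such that
$$\Big\|\sum_{j\in I_k}\<Fz_k,g_j\>\<g_j,Fz_k\>\Big\|\ge\e .$$
Each new $I_k$ is chosen outside the finite union of the earlier ones; this is possible because the estimate fails for every finite $I$. Each $Fz_k$ has only countably many nonzero coefficients, so the norm-convergent series lets us cut the relevant sets to finite ones. Inside $I_k$ there is then a single index $j_k$ with $\|a_k\|^2\ge\e/|I_k|$, where $a_k=\<g_{j_k},Fz_k\>$. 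The cardinalities $|I_k|$ are uncontrolled, so I would instead keep blocks in which the single largest term already has size comparable to the block. The cleanest route seems to be to assume for contradiction directly that the single coefficients $\<g_j,Fz\>$ fail to be uniformly small off finite sets. Whether this reduction is automatic, or whether one must pass through a Step 1 variant in which $I$ is replaced by the single indices $j_k$, is the point I would check carefully.

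\emph{Step 3: the pseudometric, which is the main obstacle.} A state $\f$ with $\f(b)=\|b\|$ on a positive element does not control $|\f(\<x,g_j\>)|^2$, by the Cauchy--Schwarz inequality in the wrong direction. The fix is to twist the frame elements by the coefficients themselves. Put $c_k=a_k/\|a_k\|$ and $y_k=D^{-1/2}g_{j_k}c_k$, where $D$ is the upper frame constant of $\{g_j\}$ (at least $1$, and with $\|g_j\|$ bounded). Then
$$\sum_k\<x,y_k\>\<y_k,x\>\le D^{-1}\sum_j\<x,g_j\>\<g_j,x\>\le\<x,x\>,$$
and $\|y_k\|\le1$, so $X=\{y_k\}$ is $\cN$-admissible. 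Now $\<Fz_k,y_k\>=D^{-1/2}a_k^*a_k/\|a_k\|$ is positive. Choosing a state $\f_k$ norming it gives $|\f_k(\<Fz_k,y_k\>)|^2=D^{-1}\|a_k\|^2$, which is bounded below by a fixed positive constant. The ordering of $X$ matches the index $k$, so the $i=k$ term appears in the supremum defining $d_{X,\F}$.

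\emph{Step 4: contradiction with total boundedness.} Take a finite $\d$-net $w_1,\dots,w_n$ in $F(B)$ for $d_{X,\F}$, with $\d$ small. For each fixed $w_m$, the coefficients $\<w_m,g_{j_k}\>\to0$ as $k\to\infty$, because the series $\sum_j\<w_m,g_j\>\<g_j,w_m\>$ converges and the $j_k$ are distinct. Hence for large $k$ the $k$-th term of $d_{X,\F}(Fz_k,w_m)$ stays bounded below for every $m=1,\dots,n$. This contradicts the net property and proves the uniform tail estimate, and with it the $\A$-compactness of $F$.
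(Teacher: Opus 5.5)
There is a genuine gap, and it sits where you hesitated in Step 2. The passage from blocks to single indices is not just unverified; it cannot work. Failure of the uniform tail estimate only yields pairwise disjoint finite blocks $I_k$ with $\bigl\|\sum_{j\in I_k}\<Fz_k,g_j\>\<g_j,Fz_k\>\bigr\|$ bounded below. Steps 3--4, run with single indices $j_k$, can at best show that the individual coefficients $\<g_j,Fz\>$, $z\in B$, are uniformly small off finite subsets of $J$. That property does not imply compactness even for $\A=\mathbb C$. Split $\mathbb N$ into consecutive blocks $I_n$ with $|I_n|=n$, and let $F$ be the isometry of $\ell_2$ with $Fe_n=n^{-1/2}\sum_{j\in I_n}e_j$. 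Then $|\<e_j,Fz\>|=|\<F^*e_j,z\>|\le n^{-1/2}$ for $j\in I_n$ and $z\in B$, yet $F$ is not compact. So neither the single-coefficient contradiction hypothesis nor a ``Step 1 variant'' with single indices can close the argument.

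The fix is to apply your twisting trick to the whole block rather than to one frame element. With $\xi_k=(\<g_j,Fz_k\>)_{j\in I_k}\in\A^{I_k}$, put
\[
y_k=D^{-1/2}\sum_{j\in I_k}g_j\,\<g_j,Fz_k\>\,\|\xi_k\|^{-1}.
\]
This choice has the four properties you need:
\begin{enumerate}
\item $\<Fz_k,y_k\>=D^{-1/2}\<\xi_k,\xi_k\>/\|\xi_k\|$ is positive, with norm $D^{-1/2}\|\xi_k\|$, which is bounded below.
\item The Cauchy--Schwarz inequality in $\A^{I_k}$ gives $\<x,y_k\>\<y_k,x\>\le D^{-1}\sum_{j\in I_k}\<x,g_j\>\<g_j,x\>$, so disjointness of the blocks yields admissibility.
\item $\|y_k\|\le D^{-1/2}\|\Theta_g^*\|\le 1$, where $\Theta_g$ is the frame operator of $\{g_j\}$.
\item For each fixed $w\in\cN$, $\|\<y_k,w\>\|^2\le D^{-1}\bigl\|\sum_{j\in I_k}\<w,g_j\>\<g_j,w\>\bigr\|\to0$ as $k\to\infty$.
\end{enumerate}
With these $y_k$, your Steps 3 and 4 go through essentially verbatim. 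This is the mechanism of Lemma \ref{lem:totb_acomp}, where the normalized block $\mu_i=\hat q_{j(i)}t_i/\|\hat q_{j(i)}t_i\|$ plays the role of $y_k$.

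The paper itself reaches this result by a different route (Theorem \ref{thm:totb_comp}, then Theorem \ref{thm:opercomp}). It embeds $\cN$ as an orthogonal summand of $\ell_2^J(\dot\A)$ and transfers total boundedness by Lemma \ref{lem:directsum_totbu}. It then reduces to countably many coordinates, applies Lemma \ref{lem:totb_acomp} there, and compresses back. Your repaired argument instead works intrinsically with the dual frame in $\cN$, for an arbitrary index set, and avoids both the embedding and the countability reduction.
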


The inverse statement holds for arbitrary modules \cite[Theorem 2.4]{TroitFuf2020}.  

From \cite[Lemma 1.16]{FufTro2024UniformStr} we have the following important property. 

\begin{lem}\label{lem:directsum_totbu} 
Suppose, a set $Y\subseteq  \cN=\cN_1\oplus \cN_2$ 
is $(\cN,\cN^0)$-totally bounded, 
where $\cN^0$ is a submodule. Then
$p_1 Y$ and $p_2 Y$ are 
$(\cN_1,\cN^0_1)$- and $(\cN_2,\cN^0_2)$-totally bounded, 
respectively, where $\cN^0_1=p_1(\cN^0)$,
$\cN^0_2=p_2(\cN^0)$,
and 
$p_1: \cN_1\oplus \cN_2 \to \cN_1$,
$p_2: \cN_1\oplus \cN_2 \to \cN_2$ are the orthogonal projections.

Conversely, if $p_1 Y$ and $p_2 Y$ are 
$(\cN_1,\cN^0_1)$- and $(\cN_2,\cN^0_2)$-totally bounded
for some submodules $\cN^0_1$ and $\cN^0_2$, 
respectively, then $Y$ is $(\cN,\cN^0_1\oplus\cN^0_2)$-totally
bounded.
\end{lem}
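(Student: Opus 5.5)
For the direct statement, I will reduce a pseudometric on $\cN_1$ to one on $\cN$. Fix $(X,\F)\in\mathbb{A}(\cN_1,\cN^0_1)$ with $X=\{x_i\}\subseteq\cN_1$, and view $X$ as a system in $\cN$ via the inclusion $\cN_1\subseteq\cN$.

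\begin{enumerate}
\item For $x\in\cN^0$ one has $\<x,x_i\>=\<p_1x,x_i\>$, because $x_i\perp\cN_2$.
\item Since $p_1x\in\cN^0_1$, the series $\sum_i\<x,x_i\>\<x_i,x\>$ converges. Its sum is bounded by $\<p_1x,p_1x\>\le\<x,x\>$, and the norm condition $\|x_i\|\le1$ is unchanged. So $X$ is $\cN^0$-admissible in $\cN$.
\item By construction $d_{X,\F}(y,y')=d_{X,\F}(p_1y,p_1y')$ for $y,y'\in\cN^0$.
\item Take an $\e$-net $y_1,\dots,y_n$ of $Y$ for $d_{X,\F}$. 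Then $p_1y_1,\dots,p_1y_n$ is an $\e$-net of $p_1Y$.
\end{enumerate}

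The same argument works for $p_2$.

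For the converse, fix $(X,\F)$ with $X=\{x_i\}$ admissible for $\cN^0_1\oplus\cN^0_2$, and set $X^1=\{p_1x_i\}$, $X^2=\{p_2x_i\}$.

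\begin{enumerate}
\item For $x\in\cN^0_1$ we have $\<x,p_1x_i\>=\<x,x_i\>$, and $x\in\cN^0_1\oplus\cN^0_2$. Hence convergence and the bound by $\<x,x\>$ are inherited from $X$, and $\|p_1x_i\|\le\|x_i\|\le1$. So $X^1$ is $\cN^0_1$-admissible, and likewise $X^2$ is $\cN^0_2$-admissible.
\item For $x=x^{(1)}+x^{(2)}$ we have $\<x,x_i\>=\<x^{(1)},p_1x_i\>+\<x^{(2)},p_2x_i\>$.
\item For each fixed $k$, the Minkowski inequality in $\ell^2$ applied to the tails $\bigl(\f_k(\<x,x_i\>)\bigr)_{i\ge k}$, followed by the supremum over $k$, gives
$$\nu_{X,\F}(x)\le\nu_{X^1,\F}(p_1x)+\nu_{X^2,\F}(p_2x).$$
Hence $d_{X,\F}(y,y')\le d_{X^1,\F}(p_1y,p_1y')+d_{X^2,\F}(p_2y,p_2y')$.
\end{enumerate}

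Note that every $y\in Y$ lies in $\cN^0_1\oplus\cN^0_2$, since $y=p_1y+p_2y$. Hence these pseudometrics are defined on $Y$.

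The main care is needed in the net construction, because the net must consist of points of $Y$ itself, not of pairs $(p_1y_k,p_2y'_l)$.

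\begin{enumerate}
\item Take $\e/4$-nets $\{p_1u_k\}_{k\le n}$ of $p_1Y$ for $d_{X^1,\F}$ and $\{p_2v_l\}_{l\le m}$ of $p_2Y$ for $d_{X^2,\F}$, with $u_k,v_l\in Y$.
\item Let $Y_{k,l}$ be the set of $y\in Y$ with $d_{X^1,\F}(p_1y,p_1u_k)<\e/4$ and $d_{X^2,\F}(p_2y,p_2v_l)<\e/4$. These finitely many sets cover $Y$.
\item For each nonempty $Y_{k,l}$ choose a point $w_{k,l}\in Y_{k,l}$.
\item For $y\in Y_{k,l}$, the triangle inequality through $p_1u_k$ and $p_2v_l$ gives $d_{X^1,\F}(p_1y,p_1w_{k,l})<\e/2$ and $d_{X^2,\F}(p_2y,p_2w_{k,l})<\e/2$.
\item The estimate above then gives $d_{X,\F}(y,w_{k,l})<\e$.
\end{enumerate}

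So $\{w_{k,l}\}$ is a finite $\e$-net in $Y$, which proves that $Y$ is $(\cN,\cN^0_1\oplus\cN^0_2)$-totally bounded.
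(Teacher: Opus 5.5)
Your proof is correct and complete. The paper does not prove this lemma itself; it quotes it from \cite[Lemma 1.16]{FufTro2024UniformStr}, so there is no in-paper argument to compare against.

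Your argument is the natural one. In the direct part, you regard an $\cN_1^0$-admissible system in $\cN_1$ as an $\cN^0$-admissible system in $\cN$, so that $d_{X,\F}(y,y')=d_{X,\F}(p_1y,p_1y')$. In the converse, you split an admissible $X$ into $p_1X$ and $p_2X$ and use the Minkowski bound $d_{X,\F}(y,y')\le d_{p_1X,\F}(p_1y,p_1y')+d_{p_2X,\F}(p_2y,p_2y')$. Your cells $Y_{k,l}$ with chosen points $w_{k,l}$ handle correctly the requirement of Definition \ref{dfn:totbaundset} that the $\e$-net consist of elements of $Y$ itself.
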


\section{Technical lemmas}

We need a more subtle property of standard frames that is well known in the context of Hilbert spaces.

\begin{lem}\label{lem:partsum}
Suppose $\cN$ is a Hilbert $\A$-module and $\{x_j\}_{j\in J}$ is a standard frame in $\cN$ with frame constants $c_1$ and $c_2$ and canonical dual frame $\{g_j\}_{j\in J}$.
Then for an operator of a finite partial sum of the reconstruction formula, 
$P_{J'}(x)=\sum\limits_{j\in{J'}}x_j\<g_j,x\>$, where $J'\subset J$ is finite, we have an estimate 
$||P_{J'}||\le\frac{c_2}{c_1}$.
\end{lem}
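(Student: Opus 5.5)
We factor the partial sum through the frame operator and bound each factor separately. Set $S=\Theta^*\Theta:\cN\to\cN$, the frame operator composed with its adjoint; by \eqref{eq:ThetaAndTheta*} we have $\<Sx,x\>=\sum_j\<x,x_j\>\<x_j,x\>$. The frame inequalities then read $c_1\<x,x\>\le\<Sx,x\>\le c_2\<x,x\>$, that is, $c_1\le S\le c_2$ as positive invertible elements of $\bL^*(\cN)$. In particular $\|\Theta\|^2=\|S\|\le c_2$, and $\|S^{-1}\|\le 1/c_1$. Let $Q_{J'}:\ell_2^J(\A)\to\ell_2^J(\A)$ be the coordinate projection onto the finitely many coordinates in $J'$; it is an adjointable projection of norm at most $1$. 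Since $g_j=S^{-1}x_j$ and $S^{-1}$ is self-adjoint, $\<g_j,x\>=\<x_j,S^{-1}x\>$. Hence
$$
P_{J'}(x)=\sum_{j\in J'}x_j\<x_j,S^{-1}x\>=\Theta^*Q_{J'}\Theta S^{-1}x .
$$

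The crude estimate from this factorization is
$\|P_{J'}\|\le\|\Theta^*\|\,\|Q_{J'}\|\,\|\Theta\|\,\|S^{-1}\|\le \sqrt{c_2}\cdot1\cdot\sqrt{c_2}\cdot\frac1{c_1}=\frac{c_2}{c_1}$,
which is exactly the required bound. The only points to check are that $\Theta$ is a well-defined adjointable operator with adjoint given by \eqref{eq:ThetaAndTheta*}, and that $\|\Theta^*\|=\|\Theta\|=\|S\|^{1/2}$. Both follow from the definition of a standard frame and the results of \cite[Sections 4 and 6]{FrankLarson2002} quoted above. The $C^*$-identity $\|\Theta\|^2=\|\Theta^*\Theta\|$ holds for adjointable operators between Hilbert modules.

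The bound $\|S^{-1}\|\le 1/c_1$ also needs a short justification. From $S\ge c_1\cdot\1$ in the $C^*$-algebra $\bL^*(\cN)$ (and $S\ge0$), functional calculus gives invertibility of $S$ and $\sigma(S)\subset[c_1,c_2]$. Hence $\|S^{-1}\|\le 1/c_1$. Translating the inequality $c_1\<x,x\>\le\<Sx,x\>$ for all $x$ into the operator inequality $S\ge c_1$ is standard for adjointable operators (see \cite{Lance}).

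We expect the main obstacle to be nothing deeper than this. One must keep the order of factors consistent with the right $\A$-module convention, so that $\<g_j,x\>=\<x_j,S^{-1}x\>$ is indeed what appears in the reconstruction formula of Remark \ref{rk:fr_appr}. One must also work over the unitalization if $\A$ is non-unital; but the norm of the operator $\bL^*(\cN)$ in that case is unaffected, so the same argument applies.
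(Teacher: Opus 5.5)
Your proposal is correct and is essentially the paper's proof: the same factorization $P_{J'}=\Theta^*\pi_{J'}\Theta(\Theta^*\Theta)^{-1}$, with the factors bounded by $\sqrt{c_2}$, $1$, $\sqrt{c_2}$ and $1/c_1$, and the operator inequality $\Theta^*\Theta\ge c_1$ obtained from Lance's Lemma 4.1. The only differences are cosmetic: you get $\|\Theta\|\le\sqrt{c_2}$ from the $C^*$-identity and $S\le c_2$ rather than by citing \cite[Corollary 2.1.6]{MTBook}, and your detour through the unitalization is unnecessary because $\bL^*(\cN)$ is unital in any case.
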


\begin{proof}
Since $\<\Theta x,\Theta x\>= \sum\limits_j \<x,x_j\>\<x_j,x\>\le c_2\<x,x\>$, by \cite[Corollary 2.1.6]{MTBook} we have $||\Theta||=||\Theta^*||\le\sqrt{c_2}$.

Also since $c_1\<x,x\>\le \<\Theta x,\Theta x\>=\<\Theta^* \Theta x, x\>$ from \cite[Lemma 4.1]{Lance}
it follows that $\Theta^*\Theta\ge c_1 \operatorname{Id}$ (in the $C^*$-algebra $\End^*_\A(\cN)$ of adjointable endomorphisms of $\cN$), so there exists $(\Theta^*\Theta)^{-1}$ and $||(\Theta^*\Theta)^{-1}||\le\frac{1}{c_1}$.

For any finite subset $J'\subset J$ the orthogonal projection $\pi_{J'}:\ell_2^J(\A)\to \ell_2^J(\A)$ is defined. Using (\ref{eq:ThetaAndTheta*}), we have for the finite partial sum 
\begin{eqnarray*}
P_{J'}(x)&=&\sum\limits_{j\in{J'}}x_j\<g_j,x\>=\sum\limits_{j\in{J'}}x_j\<(\Theta^*\Theta)^{-1}x_j,x\>=\sum\limits_{j\in{J'}}x_j\<x_j,(\Theta^*\Theta)^{-1}x\> \\
&=&\sum\limits_{j\in{J'}}\Theta^*e_j\<\Theta^*e_j,(\Theta^*\Theta)^{-1}x\>
=\Theta^*\sum\limits_{j\in{J'}}e_j\<e_j,\Theta\circ (\Theta^*\Theta)^{-1}x\>\\
&=&\Theta^*\circ\pi_{J'}\circ\Theta\circ(\Theta^*\Theta)^{-1}(x).   
\end{eqnarray*}
Hence
$||P_{J'}||\le||\Theta^*||\cdot||\pi_{J'}||\cdot||\Theta||\cdot||(\Theta^*\Theta)^{-1}||\le\frac{c_2}{c_1}$.
\end{proof}

We will say that a subset of a vector space with seminorm $\nu$ is $\nu$-totally bounded if it is totally bounded with respect to $\nu$.

\begin{lem}\label{lem:appr_tot_bounded}
    Let $\nu$ be some seminorm on a normed space $\cN$ such that $\nu(x)\le||x||$ for any $x\in\cN$, and let $S$ be a bounded subset of $\cN$. Suppose that for any $\e>0$ there exists an $\e$-close $\nu$-totally bounded set $S_\e$
    (i.e., $\dist(x,S_\e)<\e$ for any $x\in S$). Then $S$ is $\nu$-totally bounded.
\end{lem}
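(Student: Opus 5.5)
The plan is the standard $\e/3$-argument for total boundedness, using only the inequality $\nu\le\|\cdot\|$ and the triangle inequality for $\nu$. Fix $\e>0$. We must find finitely many points $y_1,\dots,y_n\in S$ such that every $x\in S$ satisfies $\nu(x-y_k)<\e$ for some $k$. Here we use the convention of Definition \ref{dfn:totbaundset}, with net points taken in $S$ itself.

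\textbf{Step 1: transfer to $S_{\e/3}$.} Take the $\e/3$-close $\nu$-totally bounded set $S_{\e/3}$ given by the hypothesis. For each $x\in S$ choose $s(x)\in S_{\e/3}$ with $\|x-s(x)\|<\e/3$, and hence $\nu(x-s(x))<\e/3$. Let $T=\{s(x):x\in S\}\subseteq S_{\e/3}$. Since $T$ is a subset of a $\nu$-totally bounded set, it is itself $\nu$-totally bounded. This is the one point needing care: a subset of a totally bounded set is totally bounded when net points are required to lie in the subset. One proves it by taking an $\e/6$-net of the larger set, keeping only the balls that meet $T$, and picking one point of $T$ in each. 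So $T$ has a finite $\e/3$-net $t_1,\dots,t_n\in T$.

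\textbf{Step 2: pull the net back to $S$.} For each $t_k$ choose $y_k\in S$ with $s(y_k)=t_k$; this is possible because $t_k\in T$. Now let $x\in S$ be arbitrary. Pick $k$ with $\nu(s(x)-t_k)<\e/3$. Then
$$\nu(x-y_k)\le \nu(x-s(x))+\nu(s(x)-t_k)+\nu(t_k-y_k)<\e/3+\e/3+\e/3=\e.$$
So $y_1,\dots,y_n\in S$ form an $\e$-net for $S$ with respect to $\nu$.

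\textbf{Remarks.} Boundedness of $S$ is not actually used in this argument; presumably it is included only for the later applications. The main (mild) obstacle is the bookkeeping in Step 1: making sure the net points lie in $S$ itself rather than in $S_\e$. Choosing the intermediate set $T$ and the preimages $y_k$ handles this.
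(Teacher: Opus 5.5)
Your proof is correct and is essentially the paper's argument: approximate $S$ by $S_\e$, take a finite $\nu$-net there, and replace the net points by nearby points of $S$ using $\nu\le\|\cdot\|$ and the triangle inequality. The only difference is bookkeeping. You filter through the intermediate set $T=s(S)$ (via heredity of total boundedness) and pull back by exact preimages under $s$, getting an $\e$-net directly. The paper instead keeps those net points of $S_\e$ that are $3\e$-close to some point of $S$, replaces each by such a point, and obtains a $6\e$-net.
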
 

\begin{proof}
Fix any $\e>0$
Note that for any $x\in S$ there exists $y\in S_\e$ such that $||x-y||<2\e$ and, hence, $\nu(x-y)<2\e$.

Let $\{y_1,\dots,y_D\}$ be a finite $\e$-net for $S_\e$ with respect to $\nu$. 
Let $\{y_1,\dots,y_d\}$ be the subfamily consisting of those $y_j$ for which there exists $x\in S$
(denote it by $x_j$) such that $\nu(x_j-y_j)<3\e$ (this set is not empty because for $x\in S$ we can find $y\in S_\e$ with $\nu(x-y)<2\e$, and for $y$ we can find $y_j\in\{y_1,\dots,y_D\}$ with $\nu(y-y_j)\le\e$).

Now consider $\{x_1,\dots,x_d\}$. For any $x\in S$ we can find $y\in S_\e$ with $||x-y||<2\e$, and hence $\nu(x-y)<2\e$. For this $y$ we can find $y_j\in\{y_1,\dots,y_D\}$ with $\nu(y-y_j)\le\e$, so $\nu(x-y_j)<3\e$ and, hence, $y_j\in\{y_1,\dots,y_d\}$ and $\nu(y_j-x_j)<3\e$.

Finally, $\nu(x-x_j)\le \nu(x-y_j) +\nu(x_j-y_j)<6\e$, i.e. $\{x_1,\dots,x_d\}$ is a finite $6\e$-net for $S$ with respect to $\nu$.
\end{proof}

\begin{cor}\label{cor:appr_tot_bounded}
    Let $S$ be a bounded subset of Hilbert $C^*$-module $\cN$. Suppose that for any $\e>0$ there exists an $\e$-close $(\cN,\cN)$-totally bounded set $S_\e$. Then $S$ is $(\cN,\cN)$-totally bounded.
\end{cor}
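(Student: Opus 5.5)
The plan is to reduce Corollary \ref{cor:appr_tot_bounded} to Lemma \ref{lem:appr_tot_bounded}, applied once to each seminorm of the uniform structure. By Definition \ref{dfn:totbaundset}, $(\cN,\cN)$-total boundedness of a set $Y$ means the following: for every pair $(X,\F)\in\mathbb{A}(\cN,\cN)$ and every $\e>0$ there is a finite $\e$-net in $Y$ for $d_{X,\F}$. Equivalently, $Y$ is $\nu_{X,\F}$-totally bounded for each seminorm $\nu_{X,\F}\in\mathbb{SN}(\cN,\cN)$, since $d_{X,\F}(x,y)=\nu_{X,\F}(x-y)$. So I fix an admissible pair $(X,\F)$ and aim to show that $S$ is $\nu_{X,\F}$-totally bounded.

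The key step is to check the hypothesis $\nu_{X,\F}(x)\le\|x\|$ of Lemma \ref{lem:appr_tot_bounded}. Each $\f_k$ is a state, so by the Cauchy--Schwarz inequality for positive functionals on $C^*$-algebras, $|\f_k(\<x,x_i\>)|^2\le \f_k(\<x,x_i\>\<x_i,x\>)$. Summing over $i\ge k$ and using positivity of $\f_k$ together with admissibility condition 2), I get
$$\sum_{i\ge k}|\f_k(\<x,x_i\>)|^2\le \f_k\Big(\sum_i\<x,x_i\>\<x_i,x\>\Big)\le \f_k(\<x,x\>)\le\|x\|^2 .$$
Here the norm convergence in condition 1) lets me interchange $\f_k$ with the series. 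Taking the supremum over $k$ gives $\nu_{X,\F}(x)\le\|x\|$.

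Next, for every $\e>0$ the set $S_\e$ from the hypothesis is $(\cN,\cN)$-totally bounded, hence in particular $\nu_{X,\F}$-totally bounded. It is also $\e$-close to $S$ in norm. Since $S$ is bounded, Lemma \ref{lem:appr_tot_bounded} applies and shows that $S$ is $\nu_{X,\F}$-totally bounded. The lemma's proof produces the net points $x_j$ inside $S$ itself, which is exactly what Definition \ref{dfn:totbaundset} demands. Since $(X,\F)$ was arbitrary, $S$ is $(\cN,\cN)$-totally bounded.

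The only point needing care is the seminorm estimate: the Cauchy--Schwarz inequality for states and passing $\f_k$ through the convergent series. I do not expect a genuine obstacle there.
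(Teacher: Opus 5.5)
Your proposal is correct and follows the paper's own proof: you apply Lemma \ref{lem:appr_tot_bounded} to each seminorm $\nu_{X,\F}$ with admissible $X$, using the estimate $\nu_{X,\F}(x)\le\|x\|$. The paper only asserts that estimate, while you also justify it via the Cauchy--Schwarz inequality for states and admissibility condition 2).
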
 

\begin{proof}
It is sufficient to apply the previous Lemma for any seminorm $\nu_{X,\F}$ with some admissible $X$ and a sequence of states $\F$ and note that $\nu_{X,\F}(x)\le||x||$ for any $x\in\cN$.
\end{proof}

The following lemma is a generalization of \cite[Lemma 4.4]{Troitsky2020JMAA}.

\begin{lem}\label{lem:totb_acomp}
    If some subset $T$ of $\ell_2(\A)=\bigoplus\limits_{k\in\mathbb N}\A$ is $(\ell_2(\A),\ell_2(\A))$-totally bounded, then for any $\e>0$ there exists a sufficiently large integer $D$
 such that the orthogonal
    projection $Q_D:\ell_2(\A)\to\ell_2(\A)$ onto the first $D$ coordinates satisfies
    $$
    ||x-Q_Dx||<\e
    $$
for any $x\in T$. If $\A$ is unital, this means that $T$ is $\A$-precompact in the sense of \cite{m_bratisl}.  
\end{lem}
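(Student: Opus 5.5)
I would argue by contradiction, building a single admissible pair $(X,\F)$ whose seminorm $\nu_{X,\F}$ sees every failure of tail-smallness at once. Here $X$ will be the standard basis of $\ell_2(\A)$, or a modified version of it if $\A$ is not unital. Suppose that for some $\e>0$ and every $D$ there is $x\in T$ with $\|x-Q_Dx\|\ge\e$. Note that $\|x-Q_Dx\|^2=\|\sum_{i>D}x_i^*x_i\|$, where $x=(x_i)$. Choose a state $\f$ with $\f(\sum_{i>D}x_i^*x_i)\ge \e^2/2$; this requires only a norming state of a positive element. Since the sum converges in norm, we can also pick $D'>D$ with $\f(\sum_{i=D+1}^{D'}x_i^*x_i)\ge\e^2/3$.

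Iterating, I obtain elements $y^{(1)},y^{(2)},\dots\in T$, disjoint blocks $(D_k,D_k']$ with $D_{k+1}>D_k'$, and states $\f_k$ with $\f_k(\sum_{i\in(D_k,D_k']}(y^{(k)}_i)^*y^{(k)}_i)\ge\e^2/3$. The blocks must also be pushed far enough that the earlier elements $y^{(1)},\dots,y^{(k-1)}$ have tails beyond $D_k$ of norm less than $\d$, with $\d$ small; this is possible since there are finitely many of them.

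Next I pass to scalar quantities through the seminorm. I would like $|\f_k(\<y,x_i\>)|^2$ to capture $\f_k(y_i^*y_i)$, but $|\f(a)|^2\le\f(a^*a)$ goes the wrong way. To fix this, use the GNS representation of $\f_k$: $\f_k(y_i^*y_i)=\|y_i\xi_k\|^2$. Then choose $b_i\in\A$ with $\|b_i\|\le1$ such that $\f_k(b_i^*y_i)$ is approximately $\|y_i\xi_k\|$, for instance by polar-decomposition-type approximants $b_i=y_i(\eta+y_i^*y_i)^{-1/2}$ chosen so that the sum over $i$ recovers a fixed fraction. Set $x_i=e_ib_i$ (the $b_i$ supported in block $k$ for the index $i$), which is admissible since $\sum_i\<y,x_i\>\<x_i,y\>=\sum_i y_i^*b_ib_i^*y_i\le\<y,y\>$. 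By Cauchy–Schwarz across the block, $\sum_{i}|\f_k(b_i^*y_i)|^2$ is bounded below by a constant times $\e^4/(\text{block length})$. This dependence on block length is the delicate point, and I expect it to be the main obstacle. To avoid it I would instead compress each block into a single vector $x_{(k)}=\sum_{i\in\text{block}}e_ib_i$ with normalized $b_i$. Admissibility then still holds because the blocks are disjoint, and $|\f_k(\<y^{(k)},x_{(k)}\>)|$ is at least of order $\e^2$.

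With the enumeration arranged so that the $k$-th vector sits at position $k$ and the states are indexed accordingly, we get $d_{X,\F}(y^{(k)},y^{(l)})\ge c\e^2-O(\d)$ for $k\ne l$. For $l<k$ this follows from the tail smallness of $y^{(l)}$. For $l>k$ the supremum in the definition of $\nu_{X,\F}$ is taken over starting indices, so I use the term for index $l$ together with the tail of $y^{(k)}$. Hence $T$ has no finite $c\e^2/2$-net for $d_{X,\F}$, contradicting total boundedness.

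The final clause is immediate in the unital case: $Q_D(T)$ lies in the free module $\A^D=\Span_\A(e_1,\dots,e_D)$, which is exactly the approximation condition of \cite{m_bratisl}.
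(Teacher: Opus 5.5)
Your architecture matches the paper's: contradiction, far-out disjoint blocks, one admissible vector per block paired with a state at the same index, and small tails of finitely many other elements on later blocks. Your endgame, an infinite $d_{X,\F}$-separated sequence using the index $\max(k,l)$, is a harmless variant of the paper's, which fixes a finite $\d/8$-net first and places the blocks beyond its tails.

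\textbf{The gap.} It sits at the one step that needs an idea: the lower bound for $|\f_k(\<y^{(k)},x_{(k)}\>)|$, which you only assert. There are two readings of ``normalized $b_i$'', and neither works as written.
\begin{itemize}
\item \emph{Per-coordinate normalization.} If each $b_i$ satisfies $\|b_i\|\le 1$ (e.g.\ your approximants $y_i(\eta+y_i^*y_i)^{-1/2}$), then $\<x_{(k)},x_{(k)}\>=\sum_i b_i^*b_i$ can have norm comparable to the block length. This breaks condition 3) of admissibility. Disjointness of the blocks does not rescue condition 2) either: for $y=x_{(k)}$ it forces $\|x_{(k)}\|\le 1$. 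Rescaling to fix this brings back exactly the block-length loss you wanted to avoid.
\item \emph{Whole-block normalization.} Take $x_{(k)}=\hat q_k y^{(k)}/\|\hat q_k y^{(k)}\|$, with $\hat q_k$ the block projection. Then $\<x_{(k)},y^{(k)}\>=\<\hat q_k y^{(k)},\hat q_k y^{(k)}\>/\|\hat q_k y^{(k)}\|$. But you fixed $\f_k$ beforehand and kept only the absolute bound $\f_k(\<\hat q_k y^{(k)},\hat q_k y^{(k)}\>)\ge\e^2/3$. This yields only $\f_k(\<x_{(k)},y^{(k)}\>)\ge \e^2/(3\|\hat q_k y^{(k)}\|)$, which is ``of order $\e^2$'' only under a uniform norm bound on $T$. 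That bound is not a hypothesis of the lemma; in the paper, boundedness of totally bounded sets is a separate imported result.
\end{itemize}

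\textbf{The fix.} The missing idea is to normalize the whole block and choose the state \emph{after} the vector, as the paper does with $\mu_i=\hat q_{j(i)}t_i/\|\hat q_{j(i)}t_i\|$. Then $\<\mu_i,\hat q_{j(i)}t_i\>$ is a positive element of norm $\|\hat q_{j(i)}t_i\|>\frac34\d$. A (norming) state with $|\f_i(a)|\ge\|a\|/2$ gives a lower bound of order $\d$, not $\d^2$, with no GNS construction and no norm bound on $T$.

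Alternatively, you can keep your order of choices if you retain the relative information of the norming state. Take $\f_k(a)=\|a\|$ for $a=\<(1-Q_{D_k})y^{(k)},(1-Q_{D_k})y^{(k)}\>$, and choose $D_k'$ so that $\|(1-Q_{D_k'})y^{(k)}\|^2<\|a\|/2$. Then $\f_k(\<x_{(k)},y^{(k)}\>)\ge\|a\|^{1/2}/2\ge\e/2$.

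Admissibility of the block-normalized vectors follows from $\|x_{(k)}\|=1$, disjointness of the blocks, and the module Cauchy--Schwarz inequality $\<y,x\>\<x,y\>\le\|x\|^2\<y,y\>$ applied to $\hat q_k y$. With this repair, and $\d$ chosen small relative to $\e$, the rest of your argument goes through.
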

    
\begin{proof}
Suppose that this is not true.
    Then there exist $\delta>0$, a sequence of elements $t_i\in T$, and a corresponding
increasing sequence of numbers $j(i)\to\infty$ such that 
$\|(1-Q_{j(i)}) t_i\|>\delta$. 
Without loss of generality  
(passing to a subsequence of $j(i)$ if necessary)
we can assume $\|(Q_{j(i+1)}-Q_{j(i)}) t_i\|>\frac{3}{4}\delta$. Denote $\hat q_{j(i)}:=Q_{j(i+1)}-Q_{j(i)}$.
Let 
$\mu_i\in L_{j(i+1)}\ominus L_{j(i)}=\Im \hat q_{j(i)}$ (where $L_{j(i)}=\Im Q_{j(i)}$)
be an element of norm 1 such that
\begin{equation}\label{eq:ozenkasomegojj}
\| \< \mu_i, \hat q_{j(i)} t_i\>\|>\frac{3}{4}\delta 
 \end{equation} 
 (one can take $\mu_i= \hat q_{j(i)} t_i/
 \|\hat q_{j(i)} t_i\|$).
 Define $x_i\in \ell_2(\A)$ to be $\mu_i$ extended by zero on the orthogonal complement to $L_{j(i+1)}\ominus L_{j(i)}$. In other words,
 $$
 x_i = \hat q_{j(i)}^* \mu_i, \qquad (1-\hat q_{j(i)}) x_i =0.
 $$
There exists
a state $\f_i$ (see \cite[Lemma 1.2]{Troitsky2020JMAA}, for example) such that
\begin{equation}\label{eq:bolshiedalekieell}
|\f_i (\<\mu_i, \hat q_{j(i)} t_i\>)|>\frac{3\delta}{8},
\qquad i=1,2,\dots.
\end{equation}
For these data there exists
$d_{X,\F}$ with $X=\{x_i\}$ and $\F=\{\f_i\}$, because
$X$ is evidently admissible for $\ell_2(\A)$.
Thus, one can find a
finite collection $\{y_1,\dots,y_n\}$ of elements of $T$
such that for any $y\in T$ there exists $k\in \{1,\dots,n\}$
with
\begin{equation}\label{eq:ozenkadliasetiell}
d_{X,\F}(y,y_k)<\frac{\delta}{8}.
\end{equation}
There exists a sufficiently large $D$ such that
\begin{equation}\label{eq:ubyvaniesetiell}
\|(1-Q_D) y_k\|<\frac{\delta}{8},\qquad k=1,\dots,n.
\end{equation}
Then for $i$ with $j(i)>D$ and any $k\in \{1,\dots,n\}$, 
by (\ref{eq:bolshiedalekieell}) and (\ref{eq:ubyvaniesetiell})
we have
$$
d_{X,\F} (t_i,y_k)
\ge |\f_i(\<t_i-y_k,x_i\>)|=
|\f_i(\<x_i,t_i-y_k\>)|
$$
$$
=|\f_i(\<\mu_i, \hat q_{j(i)} t_i\>) 
-\f_i(\<\mu_i, \hat q_{j(i)} y_k\>)|
\ge \frac{3\delta}{8} - \frac{\delta}{8} = \frac{\delta}{4}.
$$
A contradiction with (\ref{eq:ozenkadliasetiell}).
    \end{proof}

\section{Main results}

\begin{teo}\label{thm:setcomp}
Let $Z\subset \cN$ be a bounded subset. The following statements are equivalent.

\begin{enumerate}
\item[\rm a)] For every $\varepsilon>0$ there exist $M_\varepsilon>0$ and elements $z_1,\dots,z_s\in \cN$
such that for every $x\in Z$ there exist $a_1,\dots,a_s\in \A$ with $\|a_k\|\le M_\varepsilon$ and
\[
\Bigl\|x-\sum_{k=1}^s z_k a_k\Bigr\|<\varepsilon.
\]

\item[\rm b)] $Z$ lies in a countably generated submodule $\cN^0$, 
and for arbitrary such submodule
and for its arbitrary standard frame $\{x_j\}_{j\in \mathbb N}$ (which is at most countable by Lemma \ref{lem:countcountframe}, since $\cN^0$ is countably generated) with canonical dual $\{g_j\}_{j\in \mathbb N}$, and the reconstruction formula holds for $Z$ uniformly, i.e. for every $\varepsilon>0$ there exists $N\in\mathbb N$ such that for any $n>N$
$$
\sup_{x\in Z}\Bigl\|x-\sum_{j=1}^n x_j\< g_j,x\>\Bigr\|<\varepsilon .
$$

\item[\rm c)] 
The restriction of the identity operator on $Z$ is (non-formally speaking) $\A$-compact, i.e.
(formally speaking)
for every $\varepsilon>0$ there exists an operator 
$T$, $Tx=\sum\limits_{k=1}^nz_k\<y_k,x\>$, such that
\[
\sup_{x\in Z}\|x-Tx\|<\varepsilon.
\]

\item[\rm d)] 
The restriction of the identity operator on $Z$ is Banach-compact, i.e.
for every $\varepsilon>0$ there exists a finite rank operator $T$, $Tx=\sum\limits_{k=1}^nz_kf_k(x)$, $f_k\in\cN'$, such that
\[
\sup_{x\in Z}\|x-Tx\|<\varepsilon.
\]
\end{enumerate}
\end{teo}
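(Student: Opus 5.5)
I would prove the chain a) $\Rightarrow$ b) $\Rightarrow$ c) $\Rightarrow$ d) $\Rightarrow$ a).

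\textbf{The easy implications.} The implication c) $\Rightarrow$ d) is immediate, since $\<y_k,\cdot\>\in\cN'$. For d) $\Rightarrow$ a), given $\e$ take $T=\sum_k z_k f_k(\cdot)$ and put $a_k=f_k(x)$. Then
$$\|a_k\|\le\|f_k\|\sup_{x\in Z}\|x\|=:M_\e,$$
which is finite because $Z$ is bounded. For b) $\Rightarrow$ c), I would choose $T=P_{\{1,\dots,n\}}$, i.e. $Tx=\sum_{j\le n}x_j\<g_j,x\>$, which has exactly the required form with $z_j=x_j$ and $y_j=g_j$. The only point to check is that $x_j,g_j\in\cN^0\subset\cN$ and that the inner product in $\cN^0$ is the restriction of the one in $\cN$. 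So the whole content lies in a) $\Rightarrow$ b).

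\textbf{a) $\Rightarrow$ b): the submodule.} Take $\e=1/m$ for $m\in\mathbb N$, and collect all the resulting elements $z^{(m)}_k$. Let $\cN^0$ be the closed submodule they generate. It is countably generated, and $Z\subset\cN^0$ because each $x\in Z$ is a limit of elements of $\Span_\A\{z^{(m)}_k\}$. Now let $\cN^0$ be an arbitrary countably generated closed submodule containing $Z$, with an arbitrary standard frame $\{x_j\}$; by Lemma~\ref{lem:countcountframe} the frame is countable. Fix $\e>0$ and take $z_1,\dots,z_s$ and $M_\e$ from a).

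\textbf{a) $\Rightarrow$ b): the key subtlety.} The $z_k$ given by a) lie in $\cN$, not necessarily in $\cN^0$. I would first try to replace them by $\cN^0$-elements. If $\A$ is unital, or if $\cN^0$ is orthogonally complemented, one projects; but in general $\cN^0$ need not be complemented. Instead I would use the combination $\sum_k z_ka_k$, which is within $\e$ of $x\in\cN^0$, and approximate directly. By Lemma~\ref{lem:partsum}, $P_n:=P_{\{1,\dots,n\}}$ satisfies $\|P_n\|\le c_2/c_1$ on $\cN^0$, and $P_n\to\mathrm{Id}$ pointwise on $\cN^0$ by the reconstruction formula (Remark~\ref{rk:fr_appr}). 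Hence $P_n\to\mathrm{Id}$ uniformly on norm-compact subsets of $\cN^0$, and also on sets of the form $\{\sum_k w_ka_k : \|a_k\|\le M\}$ with $w_k\in\cN^0$. The latter holds because $P_n(w_ka_k)=(P_nw_k)a_k$, so
$$\Bigl\|\sum_k (w_k-P_nw_k)a_k\Bigr\|\le M\sum_k\|w_k-P_nw_k\|\to0 .$$
The bound on the coefficients $a_k$ is used exactly here.

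\textbf{a) $\Rightarrow$ b): the main obstacle.} It remains to get $w_k\in\cN^0$. The plan is to choose the data in a) with $\e'$ much smaller than $\e$, and define $\cN^0$ using all the $z$'s at all scales. For an arbitrary $\cN^0$ this is not available, so this is the step I expect to be hardest. I would handle it as follows. Extend the frame $\{x_j\}$ of $\cN^0$ to a frame of $\cN^1:=\overline{\Span_\A}(\cN^0\cup\{z_k\})$, which is still countably generated. Then verify that for $x\in\cN^0$ the reconstruction $\sum_j x_j\<g_j,x\>$ with respect to the original frame of $\cN^0$ can be controlled. One route is to approximate $P^{\cN^0}_n$ on $\cN^0$ by operators defined on all of $\cN^1$ with uniformly bounded norm. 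If that fails, I would fall back on the Kasparov stabilization theorem: embed $\cN^0$ as a complemented summand in $\ell_2(\dot\A)$, enlarging $\cN$ by a direct summand, where an orthogonal projection onto $\cN^0$ exists. Projecting the $z_k$ into $\cN^0$ by this projection changes $\sum_k z_ka_k$ by at most $\|x-\sum_k z_ka_k\|$ times the projection norm, which is 1. With $w_k$ the projected $z_k$, the estimate above together with the bound $\|P_n\|\le c_2/c_1$ gives
$$\sup_{x\in Z}\|x-P_nx\|\le(1+c_2/c_1)\e+o(1),$$
which completes b).
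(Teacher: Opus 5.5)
\paragraph{Where your proposal matches the paper.} Your implications b)$\Rightarrow$c)$\Rightarrow$d)$\Rightarrow$a) are the paper's. Your core estimate for a)$\Rightarrow$b) is also exactly the paper's argument: the uniform bound $\|P_n\|\le c_2/c_1$ from Lemma~\ref{lem:partsum}, convergence $P_nw_k\to w_k$ for the finitely many $w_k\in\cN^0$, and $\|a_k\|\le M$ to control $\sum_k(w_k-P_nw_k)a_k$. This estimate needs the approximating elements themselves to lie in $\cN^0$. That is why the paper takes $\cN^0$ to be the submodule generated by all the $z^{(m)}_k$ and applies the reconstruction formula directly to the $z_k$. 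For that $\cN^0$ and any standard frame of it, your argument is complete. It is also all that b)$\Rightarrow$c) uses. The paper's proof likewise covers only this $\cN^0$.

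\paragraph{The gap: passing to an arbitrary $\cN^0$.} Your last paragraph tries to handle an arbitrary countably generated $\cN^0\supseteq Z$, and it does not work. Kasparov stabilization embeds $\cN^0$ alone as a summand of $\ell_2(\dot\A)$. That embedding is unrelated to the inclusion $\cN^0\subset\cN$, so it gives no projection on a module containing both $\cN^0$ and the $z_k$. Enlarging $\cN$ cannot help either. If $\cN^0$ were orthogonally complemented in $\cN\oplus L$, it would already be complemented in $\cN$, since for $x\in\cN$ both $x$ and its projection lie in $\cN$. Unitality of $\A$ does not give complementability, and the ``extend the frame'' route is left without content.

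\paragraph{This reading of b) is false.} No argument can close the step, as the following example shows.
\begin{itemize}
\item Take $\A=c$ as in Theorem~\ref{ex_bad}, $\cN=\A$, $\cN^0=c_0$ (countably generated by the $\delta^j$), and $Z$ the unit ball of $c_0$.
\item Use the Parseval frame $\{\delta^j\}$ of $c_0$, so $g_j=\delta^j$.
\item Condition a) holds in $\cN$ with $s=1$, $z_1=1$, $a_1=x$, $M_\e=1$. Even c) holds, with $T=\theta_{1,1}=\mathrm{Id}$.
\item Yet $\|\delta^{n+1}-\sum_{j\le n}\delta^j\<\delta^j,\delta^{n+1}\>\|=\|\delta^{n+1}\|=1$ for every $n$, so the reconstruction is not uniform on $Z$.
\end{itemize}
Here $c_0$ is not complemented in $c$ even though $c$ is unital.

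\paragraph{What to prove instead.} Prove b) in the existential form: there is a countably generated $\cN^0\supseteq Z$, namely $\overline{\Span_\A\{z^{(m)}_k\}}$, such that for every standard frame of it the reconstruction formula holds uniformly on $Z$. Your core estimate already proves exactly this, so drop the final paragraph.
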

\begin{dfn}\label{dfn:setcomp}
A bounded subset satisfying any of these conditions we will call \emph{$\A$-pre\-com\-pact.}
\end{dfn}

\begin{rk}
    Note that from $a)$ it automatically follows that $Z$ is bounded (indeed, for $\e=1$ we have $||x||\le||x-\sum_{k=1}^s z_k a_k||+||\sum_{k=1}^s z_k a_k||\le1+s M_1\max\limits_{k=1,\dots,s}\{||z_k||\}$), while from $b),c),d)$ it does not follow, because $Z$ may contain some submodule of finite rank and partial sums may act identically on it. 
    So without the requirement of boundedness $a)$ is not equivalent to the rest.
\end{rk}

\begin{proof}[Proof of Theorem \ref{thm:setcomp}]
$a)\Rightarrow b)$. 
For $\e_n=\frac{1}{n}$, $n\in\mathbb N$, consider a countable family of finite sets $\{z^{(n)}_1,\dots,z^{(n)}_{s_n}\}$. So for any $\e>0$ there exists $\e_n<\e$ and $a_1,\dots,a_s\in \A$ such that 
$||x-\sum_{k=1}^{s_n} z^{(n)}_k a_k||<\frac{1}{n}<\e$, so $Z$ lies in a countably generated (by $z^{(n)}_k$) submodule $\cN^0\subset\cN$, which has a countable standard frame $\{x_j\}_{j\in\mathbb N}$ (with canonical dual $\{g_j\}$) since any countably generated Hilbert $C^*$-module has a standard frame.

Take $\e>0$, and for $\e'=\e\frac{c_1}{3c_2}$ take corresponding $M_{\e'}$ and $z_1,\dots,z_s$,
i.e. for 
any $x\in Z$ there exist $a_1(x),\dots,a_s(x)\in\A$, $||a_k(x)||\le M_{\e'}$ for $k=1,\dots,s$, such that 
\begin{equation}\label{eq:forak}
\left\|x-\sum_{k=1}^s z_k a_k(x)\right\|<\e'=\e\frac{c_1}{3c_2},
\end{equation}
where $c_1$ and $c_2$ are frame constants of $\{x_j\}$. Note that $\frac{c_1}{c_2}\le1$.

From the reconstruction formula we have that there exists $N\in\mathbb N$ such that, for any $n>N$,
$$
\left\|z_k-\sum\limits_{j=1}^nx_j\<g_j,z_k\>\right\|\le\frac{\e}{3sM_{\e'}}, \qquad   k=1,\dots,s.
$$

Then, for $n>N$ and any $x\in Z$ we have
$$
\left\|x-\sum_{j=1}^n x_j\< g_j,x\>\right\|\le
$$
\begin{multline*}
\le
\left\|x-\sum_{k=1}^s z_k a_k(x)\right\|+\left\|\sum_{k=1}^s z_k a_k(x)-\sum_{k=1}^s\sum_{j=1}^nx_j\<g_j, z_k a_k(x)\>\right\|
\\
+\left\|\sum_{j=1}^n\sum_{k=1}^sx_j\<g_j, z_k a_k(x)\>-\sum_{j=1}^n x_j\< g_j,x\>\right\|\le
\end{multline*}
$$\le
\e\frac{c_1}{3c_2}+\sum\limits_{k=1}^s\left\|\left(z_k-\sum\limits_{j=1}^nx_j\<g_j,z_k\>\right)a_k(x)\right\|
+\left\|\sum\limits_{j=1}^nx_j\left\<g_j,\sum\limits_{k=1}^s(z_ka_k(x)-x)\right\>\right\|\le
$$
$$\le
\frac{\e}{3}+s\frac{\e}{3sM_{\e'}}M_{\e'}+\frac{c_2}{c_1}\e\frac{c_1}{3c_2}\le\e,
$$
where the last term is estimated using  Lemma \ref{lem:partsum}
and (\ref{eq:forak}).
Thus, $b)$ holds.

The implications $b)\Rightarrow c)\Rightarrow d)$ are obvious. Let us prove $d)\Rightarrow a)$.

Fix $\varepsilon>0$ and choose $T=\sum\limits_{k=1}^sz_kf_k(\cdot)$, $f_k\in\cN'$, with $\sup_{x\in Z}\|x-Tx\|<\varepsilon$.
Set $a_k(x):=f_k(x)\in A$. Since $Z$ is bounded, $R:=\sup_{x\in Z}\|x\|<\infty$ and
\[
\|a_k(x)\|=\|f_k(x)\|\le \|f_k\|\cdot\|x\|\le \|f_k\|\cdot R.
\]
Thus with $M_\varepsilon:=R\max\limits_{k=1,\dots,s}\|f_k\|$ we have $\|a_k(x)\|\le M_\varepsilon$ for all $x\in Z$,
and
\[
\Bigl\|x-\sum_{k=1}^s z_k a_k(x)\Bigr\|=\|x-Tx\|<\varepsilon.
\]
This is a).
\end{proof}

\begin{teo}\label{thm:totb_comp}
Let $Z\subset \cN$ be a subset. If $Z$ is $\A$-precompact, then it is $(\cN,\cN)$-totally bounded.

If, in addition, $\cN$ admits a standard frame (for example, if $\cN$ is countably generated), then the converse is also true: if $Z$ is $(\cN,\cN)$-totally bounded, then it is $\A$-precompact.
\end{teo}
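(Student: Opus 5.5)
Both directions reduce the statement to facts already available: Corollary \ref{cor:appr_tot_bounded} for the forward implication, and Lemma \ref{lem:directsum_totbu}, Lemma \ref{lem:totb_acomp} and condition b) of Theorem \ref{thm:setcomp} for the converse.

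\emph{Forward direction.} I would use characterization a) of Theorem \ref{thm:setcomp} together with Corollary \ref{cor:appr_tot_bounded}. Fix $\e>0$ and take $M_\e$ and $z_1,\dots,z_s$ from a). Set
$$S_\e=\Bigl\{\sum_{k=1}^s z_ka_k : \|a_k\|\le M_\e\Bigr\}.$$
Then $Z$ is $\e$-close to $S_\e$, so it suffices to show that $S_\e$ is $(\cN,\cN)$-totally bounded. Note that $S_\e$ is the image of the ball of radius $\sqrt{s}M_\e$ in $\A^s$ (or in $\dot\A^s$) under the map $(a_k)\mapsto\sum z_ka_k$. Equivalently, it is contained in the image of the unit ball under a finite sum of elementary $\A$-compact operators $\theta_{z_k,e_k}$ from $\dot\A^s$ to $\cN$, where $e_k$ are the standard basis elements.

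By \cite[Theorem 2.4]{TroitFuf2020}, which the paper quotes as the inverse statement valid for arbitrary modules, the image of the unit ball under an $\A$-compact operator is $(\cN,\cN)$-totally bounded. Passing to $\dot\A$ changes nothing in the definition of the seminorms, since the admissible systems still lie in $\cN$.

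If one prefers a direct check, fix an admissible $X$ and states $\F$. Each term $z_ka_k$ has $\nu_{X,\F}$ controlled by
$$\sup_m\Bigl(\sum_{i\ge m}|\f_m(a_k^*\<z_k,x_i\>)|^2\Bigr)^{1/2},$$
and the tail $\sum_{i\ge m}\<z_k,x_i\>\<x_i,z_k\>$ tends to $0$ in norm. From this one gets a uniform small tail plus a finite-dimensional head, and the head has a finite net. So $S_\e$ is totally bounded for each $\e$, and Corollary \ref{cor:appr_tot_bounded} gives the claim.

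\emph{Converse.} Suppose $\cN$ has a standard frame. By Remark \ref{rk:stab_fr}, $\cN\oplus\mathcal K\cong\ell_2^J(\dot\A)$. Viewing $Z\subset\ell_2^J(\dot\A)$, the converse part of Lemma \ref{lem:directsum_totbu} (with the zero set in $\mathcal K$) makes $Z$ totally bounded in the big module. Note that admissible systems over $\A$ versus $\dot\A$ are the same, since inner products of elements of $\cN$ lie in $\A$.

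\emph{Reduction to countably many coordinates.} Next I would reduce to $\ell_2(\dot\A)$, and this is the main obstacle: $J$ may be uncountable. I would first show that total boundedness forces $Z$ to lie, up to every $\e$, in a countable subfamily of coordinates. I would argue by contradiction as in Lemma \ref{lem:totb_acomp}: otherwise one extracts $\delta>0$, points $t_i\in Z$ and pairwise disjoint finite coordinate blocks carrying mass $>\delta$ of $t_i$ and avoiding the blocks used before. This yields an admissible orthogonal system $X$ and states $\F$ that violate total boundedness.

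With the support made countable, Lemma \ref{lem:totb_acomp} applied in $\ell_2(\dot\A)$ gives uniform tail smallness $\|x-Q_Dx\|<\e$. In $\ell_2^J(\dot\A)$ the coordinate vectors form a frame with $c_1=c_2=1$ and the projections $Q_D$ are exactly partial reconstruction sums. Compressing back to $\cN$ by the projection $p$ onto $\cN$ gives
$$\|x-pQ_Dx\|<\e,$$
where $pQ_D=\sum_{j\le D}(pe_j)\<e_j,\cdot\>$ is a finite-rank $\A$-compact operator on $\cN$ over $\dot\A$. Since $\<e_j,x\>\in\A$ for $x\in\cN$, this is condition c) of Theorem \ref{thm:setcomp} in the form a). Hence $Z$ is $\A$-precompact.
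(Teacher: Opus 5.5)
Your architecture matches the paper's. In the forward direction you approximate $Z$ by the image of a ball under an $\A$-compact operator and apply Corollary \ref{cor:appr_tot_bounded}. The paper does this with the operator $T=\sum z_k\<y_k,\cdot\>$ from condition c) directly, which avoids your detour through $\dot\A^s$. In the converse you embed $\cN$ into $\ell_2^J(\dot\A)$, transfer total boundedness by Lemma \ref{lem:directsum_totbu}, reduce to countably many coordinates, apply Lemma \ref{lem:totb_acomp}, and compress back by the projection onto $\cN$. Your hand-made reduction to countable support uses disjoint finite blocks, an orthogonal admissible system, and a contradiction exactly as in Lemma \ref{lem:totb_acomp}. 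It is a valid replacement for the paper's citation of \cite[Lemma 3.3]{TroitFuf2020}. Run with finite sets instead of countable ones, it even produces the finite coordinate projection directly.

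The gap is in the last step of the converse: you never prove that $Z$ is bounded. The statement starts from an arbitrary subset $Z$, and by Definition \ref{dfn:setcomp} $\A$-precompactness includes boundedness. What you actually obtain, $\sup_{x\in Z}\|x-pQ_Dx\|<\e$, is condition c) alone. By the Remark following Definition \ref{dfn:setcomp}, c) does not imply boundedness. The passage from c) to a) in Theorem \ref{thm:setcomp} goes through d) $\Rightarrow$ a), which uses $R=\sup_{x\in Z}\|x\|<\infty$, so ``condition c) in the form a)'' is unjustified.

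The paper closes this with \cite[Corollary 1.26]{FufTro2024UniformStr}. You can also do it by hand:
\begin{enumerate}
\item For each $j\le D$, the system $X=\{pe_j,0,0,\dots\}$ is admissible for $\cN$, by the Cauchy--Schwarz inequality and $\|pe_j\|\le 1$.
\item For this system, $\nu_{X,\F}(x)=|\f_1(\<pe_j,x\>)|$. Total boundedness makes this bounded on $Z$ for every state $\f_1$.
\item Hence $\{\<pe_j,x\>:x\in Z\}$ is weakly bounded in $\A$, since states span $\A^*$. By the uniform boundedness principle it is norm bounded.
\item Since $\<e_j,x\>=\<pe_j,x\>$ for $x\in\cN$, you get $\|x\|<\e+\sum_{j\le D}\|\<pe_j,x\>\|$, which is bounded on $Z$.
\end{enumerate}
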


\begin{proof}
    Let $Z$ be $\A$-precompact, then for any $\e>0$ there exists an operator $T=\sum\limits_{k=1}^sz_k\<y_k,x\>$ such that
\[
\sup_{x\in Z}\|x-Tx\|<\varepsilon.
\]
    i.e. $Z$ is approximated in the sense of Lemma \ref{lem:appr_tot_bounded} 
    by a set $T(Z)$ which is $(\cN,\cN)$-totally bounded since it is contained in the image $T(B_R)$ of the ball of radius $R$ such that $Z\subset B_R$, and $T(B_R)$ is $(\cN,\cN)$-totally bounded by \cite{Troitsky2020JMAA}.
    So by Corollary \ref{cor:appr_tot_bounded},
    $Z$ is $(\cN,\cN)$-totally bounded too.

    Suppose that $Z$ is $(\cN,\cN)$-totally bounded and $\cN$ has a standard frame. Then, by  
    Remark \ref{rk:stab_fr},
    there exists an isometric inclusion as an orthogonal direct summand $S:\cN\to\ell_2^J(\dot\A)\cong\cN\oplus L$ for some Hilbert $C^*$-module $L$, and by Lemma \ref{lem:directsum_totbu} it follows that $S(Z)$ is $(\ell_2^J(\dot\A),\ell_2^J(\dot\A))$-totally bounded.
    By \cite[Lemma 3.3]{TroitFuf2020} $S(Z)$ lies in some submodule $\ell_2^{\widetilde{J}}(\dot\A)\subset\ell_2^J(\dot\A)$ with countable $\widetilde{J}\subset J$ and $S(Z)$ is $(\ell_2^{\widetilde{J}}(\dot\A),\ell_2^{\widetilde{J}}(\dot\A))$-totally bounded. 

For any $\e>0$,
by Lemma \ref{lem:totb_acomp}
there exists a projection $Q_D:\ell_2^{\widetilde{J}}(\dot\A)\to\ell_2^{\widetilde{J}}(\dot\A)$ onto the first $D$ coordinates such that
    $$
    ||t-Q_Dt||<\e
    $$
for any $t\in S(Z)$. In fact, $Q_D$ acts by formula $Q_D(x)=\sum\limits_{k=1}^De_k\<e_k,x\>$, where $\{e_k\}$ is the standard basis of $\ell_2^{\widetilde{J}}(\dot\A)$.

Evidently, $S^*=\pi$, where $\pi$ is a projection onto the first summand in $\cN\oplus L$.
Consider $P_D(x)=\sum\limits_{k=1}^D\pi e_k\<\pi e_k,x\>$, i.e. $P_D=\pi\circ Q_D\circ\pi^*=\pi\circ Q_D\circ S$. Note that for any $x\in Z$ we have $x=\pi Sx$, so $||x-P_Dx||=||\pi Sx-\pi Q_D Sx||\le||Sx-Q_D Sx||<\e$, so the identity operator is approximated uniformly on $Z$ by operators of finite rank, 
and by \cite[Corollary 1.26]{FufTro2024UniformStr} $Z$ is bounded,
so $Z$ is $\A$-precompact.
\end{proof}

One can describe Banach-compact operators as bounded $\A$-operators such that the image of the unit ball lies approximately in a finitely-generated $\A$-module. The same is true for $\A$-compact operators.
More precisely, we have the following statement.

\begin{teo}\label{thm:opercomp}
Let $F: \M\to \cN$ be a bounded (resp., adjointable) $\A$-linear operator and let $B$ be the unit ball of $\M$.
Then the following are equivalent:
\begin{enumerate}
\item[\rm (a)] $F\in \bBK(\M,\cN)$, i.e. it is Banach-compact (resp. $\A$-compact).
\item[\rm (b)] $F(B)$ is $\A$-precompact in $\cN$.
\end{enumerate}
\end{teo}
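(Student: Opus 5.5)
The plan is to derive both directions from Theorem \ref{thm:setcomp}, using condition d) in the Banach-compact case and condition c) in the $\A$-compact case.

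\emph{(a)$\Rightarrow$(b).} Let $F\in\bBK(\M,\cN)$ and $\e>0$. Choose a finite rank operator $G=\sum_{k=1}^s\theta_{z_k,h_k}$ with $h_k\in\M'$ and $\|F-G\|<\e$. The image $Y:=F(B)$ is bounded by $\|F\|$. Every $y\in Y$ has the form $y=Fm$ with $m\in B$, but $m$ need not be unique, so we cannot simply set $h_k(m)$ as a coefficient. Instead we look for functionals on $\cN$. The cleanest route is condition a) of Theorem \ref{thm:setcomp}, which only asks for bounded coefficients. For $y=Fm$ we have $\|y-\sum_k z_k h_k(m)\|=\|(F-G)m\|<\e$, and $\|h_k(m)\|\le\|h_k\|$ since $\|m\|\le1$. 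So we may take $M_\e=\max_k\|h_k\|$, and a) holds for $Y$. Theorem \ref{thm:setcomp} then shows that $Y$ is $\A$-precompact.

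\emph{(b)$\Rightarrow$(a), Banach case.} Suppose $F(B)$ is $\A$-precompact. By d) of Theorem \ref{thm:setcomp}, for each $\e>0$ there is $T=\sum_k z_kf_k(\cdot)$ with $f_k\in\cN'$ and $\sup_{y\in F(B)}\|y-Ty\|<\e$. Then
\[
\|F-TF\|=\sup_{m\in B}\|Fm-TFm\|\le\e .
\]
Moreover $TF=\sum_k\theta_{z_k,f_k\circ F}$ is of finite rank, since $f_k\circ F\in\M'$; this is the remark $\theta_{x,f}S=\theta_{x,f\circ S}$. Hence $F\in\bBK(\M,\cN)$.

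\emph{(b)$\Rightarrow$(a), adjointable case.} Use c) instead, which gives $T=\sum_k\theta_{z_k,y_k}$ with $y_k\in\cN$. Then $TF=\sum_k\theta_{z_k,F^*y_k}$, because
\[
\<y_k,Fm\>=\<F^*y_k,m\>,
\]
and this is where adjointability of $F$ is used. Thus $\|F-TF\|\le\e$ with $TF$ a finite sum of elementary $\A$-compact operators, so $F\in\bK(\M,\cN)$.

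\emph{(a)$\Rightarrow$(b), adjointable case.} This follows because $\bK\subset\bBK$: the functional $\<y,\cdot\>$ is bounded, so the argument of the first paragraph applies unchanged.

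The only delicate point is the direction (a)$\Rightarrow$(b). There, working with functionals defined on $\cN$ (conditions c) and d)) would require $h_k$ to factor through $F$, which fails in general. Using formulation a) of Theorem \ref{thm:setcomp} with uniformly bounded coefficients avoids this entirely. Everything else is routine.
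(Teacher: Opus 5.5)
Your proof is correct and follows essentially the paper's argument. For (a)$\Rightarrow$(b) you use condition a) of Theorem \ref{thm:setcomp} with coefficients $h_k(m)$ for a chosen preimage $m\in B$, exactly as the paper does; for (b)$\Rightarrow$(a) you compose a finite-rank approximant of the identity on $F(B)$ with $F$, moving $F^*$ onto the $y_k$ in the adjointable case. The only difference is cosmetic: you take a generic approximant $T$ from conditions c)/d), while the paper uses the frame partial sums $P_n$ from condition b), which it then reuses for the series expansion in Corollary \ref{cor_series}.
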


\begin{proof}
Suppose, $F$ is Banach-compact.
Fix an $\e>0$. Then, for some elements $z_k \in \cN$ and functionals $f_k\in \M'$, $k=1,\dots,s$, one has
    $\|F(x)-\sum\limits_{k=1}^s z_k f_k(x)\|<\e$ for any $x\in B$. 

For any $t\in F(B)$ one can find some $x\in B$ (maybe not unique) such that $t=F(x)$. So we can take $a_k(t)=f_k(x)$
and note that $||a_k(t)||\le||f_k||\cdot||x||\le||f_k||$, so we can take $M_\e=\max\limits_{1\le k\le s}||f_k||$.
Then $||t-\sum\limits_{k=1}^s z_k a_k(t)||=||F(x)-\sum\limits_{k=1}^s z_k f_k(x)||<\e$, so $F(B)$ is $\A$-precompact.
For $\A$-compact $F$ we will have the same property but with more specific functionals $f_i(x)=\<y_i,x\>$.

Conversely, if $F$ is a bounded morphism and $F(B)$ is $\A$-precompact, denote by $\{x_j\}$ a countable standard frame of the submodule in which $F(B)$ lies and by $\{g_j\}$ its dual.
Then for any $\e>0$, there exists $N\in\mathbb N$ such that for any $n>N$ we have $\|F(x)-P_n F(x)\|<\e$ for any $x\in B$, where
    $$
    P_n F(x)= \sum\limits_{j=1}^nx_j\<g_j,F(x)\>=\sum\limits_{j=1}^nx_jf_j(x)
    $$
with $f_j(x)=\<g_j,F(x)\>$.
It is a finite rank operator $\M\to \cN$, so $F$ is Banach-compact. 
In the case of an adjointable $F$ we can rewrite 
$\sum\limits_{j=1}^nx_j\<g_j,F(x)\>=\sum\limits_{j=1}^nx_j\<F^*g_j,x\>$.
\end{proof}

\begin{cor}\label{cor:adj_acomp}
    A Banach-compact operator is an $\A$-compact operator if and only if it is adjointable.
\end{cor}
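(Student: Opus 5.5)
Both implications will be read off Theorem \ref{thm:opercomp}, applied once in its bounded version and once in its adjointable version.

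For the direction ``$\A$-compact $\Rightarrow$ adjointable'', note that every elementary $\A$-compact operator $\theta_{x,y}$ is adjointable with $\theta_{x,y}^*=\theta_{y,x}$, since $\<\theta_{x,y}z,w\>=\<y,z\>^*\<x,w\>=\<z,y\<x,w\>\>$. Finite sums of adjointable operators are adjointable. The space $\bL^*(\M,\cN)$ is norm closed in $\bL(\M,\cN)$, because $\|T^*\|=\|T\|$ and so norm limits of adjointable operators have adjointable limits (see \cite[Section 2.1]{MTBook}). Hence $\bK(\M,\cN)\subset\bL^*(\M,\cN)$. It is also contained in $\bBK(\M,\cN)$, because the functional $\<y,\cdot\>$ lies in $\M'$.

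For the converse, let $F\in\bBK(\M,\cN)$ be adjointable and let $B$ be the unit ball of $\M$. The bounded version of Theorem \ref{thm:opercomp} shows that $F(B)$ is $\A$-precompact. The adjointable version of the same theorem then applies to $F$ and gives $F\in\bK(\M,\cN)$.

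Concretely, by Theorem \ref{thm:setcomp} b), $F(B)$ lies in a countably generated submodule with a standard frame $\{x_j\}$ and canonical dual $\{g_j\}$, and $F$ is uniformly approximated on $B$ by
\[
\sum_{j=1}^n x_j\<g_j,F(x)\>=\sum_{j=1}^n x_j\<F^*g_j,x\>,
\]
which is a finite sum of elementary $\A$-compact operators. The one point to check is that $g_j$ lies in $\cN$, so that $F^*g_j$ makes sense. It does: $g_j$ is in the submodule $\cN^0\subset\cN$, and $F^*\colon\cN\to\M$ is defined on all of $\cN$. So there is no real obstacle, and the corollary follows directly from Theorem \ref{thm:opercomp}.
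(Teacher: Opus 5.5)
Your proposal is correct and is essentially the paper's argument. The corollary is read off Theorem \ref{thm:opercomp}: the bounded version gives $\A$-precompactness of $F(B)$, and the adjointable version then produces $\A$-compact approximants via $\<g_j,F(x)\>=\<F^*g_j,x\>$. Your check that $\A$-compact operators are adjointable (via $\theta_{x,y}^*=\theta_{y,x}$ and norm-closedness of $\bL^*(\M,\cN)$) supplies the standard fact the paper leaves implicit.
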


\begin{cor}\label{cor:totb_comp}
Let $F: \M\to \cN$ be a bounded (resp., adjointable) $\A$-linear operator. If $F\in \bBK(\M,\cN)$ (resp., $\A$-compact), then $F(B)$ is $(\cN,\cN)$-totally bounded.

If, in addition, $\cN$ admits a standard frame (for example, if $\cN$ is countably generated), then the converse is also true: if $F(B)$ is $(\cN,\cN)$-totally bounded, then $F\in \bBK(\M,\cN)$ (resp., $\A$-compact).
\end{cor}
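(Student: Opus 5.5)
The corollary follows by combining Theorem \ref{thm:opercomp} with Theorem \ref{thm:totb_comp}, applied to the set $Z=F(B)$.

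For the direct implication, suppose $F\in\bBK(\M,\cN)$, or that $F$ is $\A$-compact in the adjointable case. By the implication (a)$\Rightarrow$(b) of Theorem \ref{thm:opercomp}, the set $F(B)$ is $\A$-precompact in $\cN$. The first part of Theorem \ref{thm:totb_comp} says that every $\A$-precompact set is $(\cN,\cN)$-totally bounded, and that part needs no hypothesis on $\cN$. Hence $F(B)$ is $(\cN,\cN)$-totally bounded. Note that boundedness of $F(B)$ is automatic, since $F$ is a bounded operator and $\|F(x)\|\le\|F\|$ for $x\in B$.

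For the converse, assume that $\cN$ admits a standard frame and that $F(B)$ is $(\cN,\cN)$-totally bounded. By the second part of Theorem \ref{thm:totb_comp}, $F(B)$ is $\A$-precompact. The implication (b)$\Rightarrow$(a) of Theorem \ref{thm:opercomp} then gives $F\in\bBK(\M,\cN)$. If $F$ is in addition adjointable, the same theorem gives that $F$ is $\A$-compact; alternatively, this follows from Corollary \ref{cor:adj_acomp}.

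Two points deserve checking, though neither is a real obstacle.

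\begin{enumerate}
\item Theorem \ref{thm:totb_comp} is stated for subsets of $\cN$ with the hypothesis only on $\cN$, so it applies directly to $Z=F(B)$.
\item Definition \ref{dfn:setcomp} requires an $\A$-precompact set to be bounded. This holds here because $F$ is bounded, so the condition is met without extra work.
\end{enumerate}

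With these observations, the argument is a direct chaining of the two theorems.
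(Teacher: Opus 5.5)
Your proposal is correct and matches the intended argument. The paper gives this corollary no separate proof; it follows at once by chaining Theorem \ref{thm:opercomp} with Theorem \ref{thm:totb_comp} for $Z=F(B)$. The adjointable case is handled by the same theorems or by Corollary \ref{cor:adj_acomp}, and the boundedness of $F(B)$ is automatic, just as you note.
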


We have one more result on the approximation of Banach-compact operators as a corollary.

\begin{cor}\label{cor_series}
Suppose that $T\in\bBK(\M,\cN)$.
Then $T$ can be approximated via $\theta_{x_j,f}$ where $x_j$ are elements of an arbitrary (countable) standard frame in $\cN^0=\overline{\operatorname{Ran}(T)}$. Moreover, it can be approximated via $\theta_{x_j,\widetilde{g_j}\circ T}$, where $\{g_j\}$ is the canonical dual frame of $\{x_j\}$ and $\widetilde{g_j}\in\cN'$, $\widetilde{g_j}(t)=\<g_j,t\>$. In other words, $T=\sum\limits_{j=1}^\infty\theta_{x_j,\widetilde{g_j}\circ T}$, where the series converges in norm.
\end{cor}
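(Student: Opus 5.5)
We combine Theorem \ref{thm:opercomp} with Theorem \ref{thm:setcomp}\,b). Let $T\in\bBK(\M,\cN)$ and let $B$ be the unit ball of $\M$. By Theorem \ref{thm:opercomp}, $T(B)$ is $\A$-precompact. In the proof of $a)\Rightarrow b)$ in Theorem \ref{thm:setcomp}, the approximating elements $z_k$ can be taken of the form $\sum_k z^{(n)}_k T$-images: they are $T$ applied to sums built from the functionals and vectors of finite rank approximants. It is cleaner, however, to argue that $T(B)$ lies in the closed submodule $\cN^0=\overline{\operatorname{Ran}(T)}$.

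First I check that $\cN^0$ is countably generated. Take finite rank operators $T_n=\sum_{k=1}^{s_n} z^{(n)}_k f^{(n)}_k$ with $\|T-T_n\|<1/n$. Then every $T(x)$, $x\in B$, is within $1/n$ of $\Span_\A\{z^{(n)}_k\}$, but the $z^{(n)}_k$ need not lie in $\cN^0$. To stay inside $\cN^0$, I would instead use condition a) of Theorem \ref{thm:setcomp} and replace the $z^{(n)}_k$ by elements of $T(\M)$. The natural choice is $T(\M)$ itself: $\operatorname{Ran}(T)$ is the closure of the images $T(x)$, and a countable generating set can be produced as follows. For each $n$, the projection-type maps $P^{(n)}$ of Theorem \ref{thm:setcomp} are not yet available, so the simplest route is this. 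Since $\|T-T_n\|\to 0$, we have $\operatorname{Ran}T\subset\overline{\Span_\A\{z^{(n)}_k\}_{n,k}}=:\cN^1$, which is countably generated. Apply Theorem \ref{thm:setcomp}\,b) in $\cN^1$: it gives a countable frame of $\cN^1$ uniformly reconstructing $T(B)$. For the statement we need a frame of $\cN^0\subset\cN^1$. The key observation is that condition b) holds for \emph{arbitrary} countably generated submodules containing $Z$ that admit a frame. So it suffices to know that $\cN^0$ admits a countable standard frame. This holds when $\cN^0$ is countably generated, and I would verify that by noting that the closed submodule $\cN^0$ of the countably generated $\cN^1$ is the closure of $T(\M)$, which is generated by the countable set $T\bigl(\text{a countable dense subset of } \M\text{-generators relevant to } T\bigr)$. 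Concretely, $T(x)=\lim T_n x$, and $T(B)$ is approximated by finite $\A$-combinations with bounded coefficients, hence by $T$ applied to countably many elements. This countability step is the main obstacle. If it fails in general, I would read the corollary as assuming the standard frame exists in $\cN^0$, as its wording "arbitrary (countable) standard frame" suggests.

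Granting a countable standard frame $\{x_j\}$ of $\cN^0$ with canonical dual $\{g_j\}$, the rest is direct. Theorem \ref{thm:setcomp}\,b) applied to $Z=T(B)\subset\cN^0$ gives, for each $\e>0$, an $N$ such that for $n>N$
$$\sup_{x\in B}\Bigl\|Tx-\sum_{j=1}^n x_j\<g_j,Tx\>\Bigr\|<\e .$$
Here $\sum_{j=1}^n x_j\<g_j,Tx\>=\sum_{j=1}^n\theta_{x_j,\widetilde{g_j}\circ T}(x)$ with $\widetilde{g_j}\in\cN'$ bounded, so the partial sums converge to $T$ in operator norm. This gives both the approximation claim and the norm-convergent series $T=\sum_j\theta_{x_j,\widetilde{g_j}\circ T}$.
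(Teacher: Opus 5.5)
Once a countable standard frame of $\cN^0$ is granted, your argument is exactly the paper's proof. You apply Theorem~\ref{thm:setcomp}\,b) to $Z=T(B)\subset\cN^0$, note that $\sum_{j\le n}x_j\<g_j,Tx\>=\sum_{j\le n}\theta_{x_j,\widetilde{g_j}\circ T}(x)$ with $\widetilde{g_j}\circ T\in\M'$, and read the uniformity over $B$ as operator-norm convergence.

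\textbf{The countability step.} Your sketch (``$T$ applied to countably many elements'') is not an argument, and the claim it aims at is false in general. Take an uncountable set $\Gamma$, $\A=\ell^\infty(\Gamma)$, $\cN=\A$, and $\M=c_0(\Gamma)$, a closed ideal viewed as a Hilbert $\A$-module. Let $T$ be the inclusion. Then $T\in\M'$ and $T=\theta_{1_\A,T}$, so $T\in\bBK(\M,\cN)$. Also $T(B)$ satisfies condition a) with $s=1$, $z_1=1_\A$, $a_1(x)=x$. However, $\overline{\operatorname{Ran}T}=c_0(\Gamma)$ is not countably generated. Countably many elements of $c_0(\Gamma)$ are all supported in some countable $\Gamma_0\subset\Gamma$, and so is every element of their closed $\A$-span. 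Hence $\delta_\gamma$ with $\gamma\notin\Gamma_0$ is at distance $1$ from that span. So $\overline{\operatorname{Ran}T}$ has no countable standard frame.

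The paper's proof passes over this point: it says ``$T(B)$ is $\A$-precompact, so $\cN^0$ has a countable standard frame''. But condition b) only guarantees \emph{some} countably generated submodule containing $T(B)$, not $\overline{\operatorname{Ran}T}$ itself.

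Your fallback reading is therefore the correct one. The statement holds whenever $\overline{\operatorname{Ran}T}$ is countably generated. This is automatic if $\M$ is countably generated, since $T$ sends a countable generating set of $\M$ to a generating set of $\overline{\operatorname{Ran}T}$. In general it holds verbatim with $\cN^0$ replaced by any countably generated closed submodule containing $\operatorname{Ran}T$, for instance your $\cN^1=\overline{\Span_\A\{z^{(n)}_k\}}$. With that proviso your proof is complete. The garbled first paragraph about ``$T$-images'' should simply be deleted.
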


\begin{proof}
Since $T$ is Banach-compact, $T(B)\subset\cN$ is $\A$-precompact, so $\cN^0$ has a countable standard frame $\{x_j\}$ with canonical dual $\{g_j\}$ and
for any $\e>0$ there exists $N\in\mathbb N$ such that for any $n>N$ we have
$||Tx-\sum\limits_{j=1}^n x_j\< g_j,Tx\>||<\e$ for all $x\in \M$, $||x||\le1$.
Note that $\< g_j,Tx\>=\widetilde{g_j}\circ T(x)$, so 
$||T-\sum\limits_{j=1}^n \theta_{x_j,\widetilde{g_j}\circ T}||<\e$.
\end{proof}

\section{Particular cases and examples}

In \cite{m_bratisl} the notion of an $\A$-precompact set was already defined for a standard module over a unital $C^*$-algebra, where elements $z_1,\dots,z_s$ were to be orthogonal generators of a free module. This is a strong condition at least because it cannot hold for modules over nonunital algebras. To work in a more general situation, we must remove the condition that the generators are free, but 
in this case the remaining condition is not sufficient for the compactness criterion of operators (even in the case of a standard module over a unital algebra, as the example in Theorem \ref{ex_bad} shows).
So it turns out to be necessary to introduce the condition that the coefficients are bounded by the constant $M_\e$. But for free generators, this condition automatically holds (as we will prove in the next theorem), so our definition of $\A$-precompactness is indeed a generalization.

But modules over unital algebras can still be complicated, in particular, have no non-singular elements (for example, we can consider any $\A$-module as module over the unitalization algebra $\dot\A$), so to obtain the equivalence (i.e. the converse implication), we restrict ourselves to the case of the standard module.

\begin{teo}\label{thm_old}
    Let $\A$ be a unital $C^*$-algebra, $\cN$ a Hilbert $\A$-module, and $Z$ a bounded subset of $\cN$.
    
If for every $\varepsilon>0$ there exists
a free finitely generated submodule $L_s\subset\cN$
such that $\dist(Z,L_s)<\e$, then $Z$ is $\A$-precompact in the sense of Definition \ref{dfn:setcomp}.

If, in addition, $\cN=\ell_2^{J}(\A)$ for some index set $J$, then the converse is also true.
\end{teo}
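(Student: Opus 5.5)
For the forward implication I will verify condition a) of Theorem \ref{thm:setcomp} directly. Fix $\e>0$ and let $L_s=\Span_\A(e_1,\dots,e_s)$ be free with $\<e_i,e_j\>=\d_{ij}1_\A$ and $\dist(x,L_s)<\e$ for every $x\in Z$. Put $R=\sup_{x\in Z}\|x\|$. For $x\in Z$ choose $y=\sum_k e_k b_k\in L_s$ with $\|x-y\|<\e$. The natural coefficients are $a_k(x)=\<e_k,x\>$, i.e. I replace $y$ by the orthogonal projection $Px=\sum_k e_k\<e_k,x\>$ onto $L_s$; this projection is an adjointable idempotent of norm at most $1$. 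Since $Py=y$, we get $\|x-Px\|\le\|x-y\|+\|P(y-x)\|<2\e$. Moreover $\|a_k(x)\|=\|\<e_k,x\>\|\le\|e_k\|\|x\|\le R$. Hence a) holds with $z_k=e_k$, tolerance $2\e$ and $M_{2\e}=R$. The only point to check is that $P$ is well defined on all of $\cN$ with $\|P\|\le1$. This follows because $\<x-Px,e_k\>=0$, so $\<x,x\>=\<Px,Px\>+\<x-Px,x-Px\>\ge\<Px,Px\>$.

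For the converse, let $\cN=\ell_2^J(\A)$ and let $Z$ be $\A$-precompact. By b) of Theorem \ref{thm:setcomp}, or directly from a), $Z$ lies in a countably generated submodule. I want to reduce to countably many coordinates, i.e. to a submodule $\ell_2^{\widetilde J}(\A)$ with countable $\widetilde J$. From a), each $x\in Z$ lies within $1/n$ of $\Span_\A\{z^{(n)}_k\}$, and each $z^{(n)}_k$ has only countably many nonzero coordinates. So every $x\in Z$ is a norm limit of elements supported on a fixed countable $\widetilde J$, and therefore $x$ itself is supported on $\widetilde J$. We may thus assume $J=\mathbb N$.

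Now let $Q_D$ be the projection onto the first $D$ coordinates. Its image $\A^D$ is free with orthonormal generators $e_1,\dots,e_D$, so it suffices to show $\sup_{x\in Z}\|x-Q_Dx\|<\e$ for large $D$. For this I apply condition b) of Theorem \ref{thm:setcomp} to the standard basis $\{e_j\}_{j\in\mathbb N}$, which is a Parseval standard frame of $\ell_2(\A)$ with dual frame $g_j=e_j$. Condition b) requires this for every countably generated submodule containing $Z$ and every standard frame of it. So I take the submodule $\ell_2(\A)$ itself, which is countably generated since $\A$ is unital. The reconstruction partial sums are then exactly $Q_n$, and b) yields $\sup_{x\in Z}\|x-Q_nx\|<\e$ for $n>N$. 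This gives $\dist(Z,L)<\e$ with $L=\Im Q_n\cong\A^n$ free.

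Alternatively, I could go through Theorem \ref{thm:totb_comp} and Lemma \ref{lem:totb_acomp}, which give the same conclusion. The main obstacle is the countability reduction in the second part: one has to justify carefully that supports of the $z^{(n)}_k$ are countable and that norm limits preserve the support. The rest is direct bookkeeping.
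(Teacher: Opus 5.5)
Your proposal is correct and follows the paper's proof. In the forward direction you use the same orthogonal projection $\sum_k e_k\langle e_k,\cdot\rangle$ onto $L_s$ and the same triangle-inequality estimate; you check condition a) with the explicit bound $M_{2\e}=R$, where the paper invokes c). In the converse you reduce to $\ell_2^{\widetilde J}(\A)$ with countable $\widetilde J$ and apply b) to the standard basis as a Parseval frame, and your extra justifications fill in details the paper only asserts: that $\|P\|\le 1$, and that the support reduction works by continuity of the coordinate maps, with $\widetilde J$ chosen so that all the $z^{(n)}_k$ lie in $\ell_2^{\widetilde J}(\A)$.
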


\begin{proof}
Fix $\e>0$ and take the corresponding submodule $L_s$. Denote by $e_1,\dots,e_s$ some free generators of $L_s$, i.e. $\<e_i,e_j\>=\delta_{ij}$. Define the operator $P_{L_s}$ by formula $P_{L_s}x=\sum\limits_{j=1}^se_j\<e_j,x\>\in L_s$.

For any $x\in Z$ we can find $y\in L_s$ such that $||x-y||<\e$ (note that $P_{L_s}y=y$), so 

$$
||x-P_{L_s}x||\le||x-y||+||P_{L_s}y-P_{L_s}x||<\e+||P_{L_s}||\cdot||y-x||<2\e,
$$
i.e. the identity operator is approximated on $Z$ by operators of finite rank, so $Z$ is $\A$-precompact.

Now suppose that $\cN=\ell_2^{J}(\A)$ and $Z$ is $\A$-precompact. In particular, $Z$ lies in some countably generated submodule, say, $Z$ lies in submodule $\ell_2^{\widetilde{J}}(\A)$ for some countable $\widetilde{J}\subset J$. This submodule has a countable standard frame consisting of standard basis $e_j$, $j\in \mathbb N$, which is normalized tight so it coincides with its canonical dual. So, since $Z$ is $\A$-precompact for any $\e>0$ there exists $s\in\mathbb N$ such that $||x-\sum\limits_{j=1}^se_j\<e_j,x\>||<\e$ for any $x\in Z$, i.e. $\dist(Z,L_s)<\e$, where $L_s=\Span(e_1,\dots,e_s)$ is a free module generated by $e_1,\dots,e_s$.
\end{proof}

If the $C^*$-algebra $\A$ is finite dimensional, then, as in the case of Hilbert spaces, the condition that coefficients can be chosen to be bounded uniformly holds automatically.

\begin{teo}
    Let $\A$ be a finite dimensional $C^*$-algebra, $\cN$ a Hilbert $\A$-module, $Z$ a bounded subset of $\cN$.

    \begin{enumerate}
        
\item[\rm 1)] If for some $\varepsilon>0$ there exist elements $z_1,\dots,z_s\in \cN$
such that for every $x\in Z$ one can find $a_1,\dots,a_s\in \A$ with
\[
\Bigl\|x-\sum_{k=1}^s z_k a_k\Bigr\|<\varepsilon,
\]
then
there exists $M_\varepsilon>0$ 
such that $a_1,\dots,a_s$ can be chosen so that $\|a_k\|\le M_\varepsilon$.  

\item[\rm 2)] If for any $\varepsilon>0$ there exist elements $z_1,\dots,z_s\in \cN$
such that for every $x\in Z$ one can find $a_1,\dots,a_s\in \A$ with
\[
\Bigl\|x-\sum_{k=1}^s z_k a_k\Bigr\|<\varepsilon,
\]
then $Z$ is $\A$-precompact in the sense of Definition \ref{dfn:setcomp}.
    \end{enumerate}

\end{teo}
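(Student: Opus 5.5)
Part 2) follows directly from part 1): applying 1) for each $\e$ gives the constants $M_\e$, so condition a) of Theorem \ref{thm:setcomp} holds, and $Z$ is $\A$-precompact by Definition \ref{dfn:setcomp}. The whole work is therefore in 1). The plan is to replace the coefficient map $(a_1,\dots,a_s)\mapsto\sum_k z_k a_k$ by its restriction to a complement of its kernel. Finite dimensionality then makes this restriction boundedly invertible.

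Consider the map $\Phi:\A^s\to\cN$, $\Phi(a_1,\dots,a_s)=\sum_{k=1}^s z_k a_k$. Since $\A$ is finite dimensional, $\A^s$ is a finite-dimensional complex normed space with norm $\max_k\|a_k\|$, and $\Phi$ is a complex-linear map. Its image $V=\Phi(\A^s)$ is a finite-dimensional linear subspace of $\cN$, hence closed. Choose a linear complement $W$ of $\Ker\Phi$ in $\A^s$. Then $\Phi|_W:W\to V$ is a linear bijection between finite-dimensional normed spaces, so its inverse $\Psi:V\to W$ is bounded, say $\|\Psi\|\le C$. We do not need $\A$-linearity of $\Psi$, only boundedness of the coefficients.

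Now fix $x\in Z$ and coefficients $a=(a_k)$ with $\|x-\Phi(a)\|<\e$. Replace $a$ by $a'=\Psi(\Phi(a))\in W$. Then $\Phi(a')=\Phi(a)$, so the approximation is unchanged, and
\[
\|a'\|\le C\|\Phi(a)\|\le C\bigl(\|x\|+\e\bigr)\le C(R+\e),
\]
where $R=\sup_{x\in Z}\|x\|<\infty$ since $Z$ is bounded. Hence $M_\e=C(R+\e)$ works. The norm on $\A^s$ is the max of the $C^*$-norms, so each $\|a'_k\|\le M_\e$.

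The only step needing care is the invertibility of $\Phi|_W$ with bounded inverse. This is standard, since all norms on finite-dimensional spaces are equivalent and linear maps between them are continuous. Note also that the same $z_1,\dots,z_s$ are retained, so no reselection of generators is needed.
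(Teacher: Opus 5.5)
Your proof is correct and follows essentially the same route as the paper: restrict the coefficient map $(a_1,\dots,a_s)\mapsto\sum_k z_k a_k$ to a complement of its kernel, invert it boundedly on its finite-dimensional range, and bound the new coefficients by a constant times $\sup_{x\in Z}\|x\|+\varepsilon$. The only difference is that the paper takes an orthogonal $\A$-submodule complement of the kernel, obtained from self-duality of $\A^s$, adjointability of the map and closedness of its range, whereas you use a plain linear complement, which suffices because $\A$-linearity of the inverse is never needed.
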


\begin{proof}
    Obviously, 2) follows from 1), so it is enough to prove 1).

Fix some $\e>0$ and corresponding $z_1,\dots,z_s\in \cN$.
For any $x\in Z$ we can find $y_x=\sum\limits_{k=1}^s z_k a_k$ such that $||x-\sum\limits_{k=1}^s z_k a_k||<\e$. The set $Y=\{y_x\}_{x\in Z}$ is bounded too, $||y_x||<D$ for some $D>0$.

Consider a linear operator $G:\A^s\to\cN$, $G(a_1,\dots,a_s)=\sum\limits_{k=1}^s z_k a_k$. 
Since $\A^s$ is a self-dual module, $G$ admits an adjoint \cite{Pas1} (see \cite[Prop. 2.5.2]{MTBook}).
The range $\operatorname{Ran}(G)$ is a finite-dimensional vector space, hence a closed submodule. Then $\Ker(G)$ is orthogonally complemented subspace: there exists a subspace $L\subset \A^s$ such that $\Ker(G)\oplus L=\A^s$ (see \cite[Theorem 2.3.3]{MTBook}).
So $G|_L:L\to\operatorname{Ran}(G)$ is an isomorphism, and 
there exists a bounded morphism $(G|_L)^{-1}:\operatorname{Ran}(G)\to L$, $||(G|_L)^{-1}||<B$ for some $B>0$.

So for any $y_x\in Y$ and $(a_1,\dots,a_s)=(G|_L)^{-1}(y_x)$,  we have $||(a_1,\dots,a_s)||\le B\cdot D$. Hence, $||a_k||<B\cdot D$. Take $M_\e=B\cdot D$.

Thus, $||x-G((G|_L)^{-1}(y_x))||<\e$, and $G((G|_L)^{-1}(y_x))=\sum\limits_{k=1}^s z_k a_k$ for some $a_1,\dots,a_s\in \A$ with $\|a_k\|\le M_\varepsilon$.
\end{proof}

\begin{teo}\label{ex_bad}
    There exists a $C^*$-algebra $\A$, a Hilbert $C^*$-module $\cN$ and an operator $F:\cN\to\cN$ such that for
every $\varepsilon>0$ there exist elements $z_1,\dots,z_s\in \cN$
such that for every $x\in F(B)$ there exist $a_1,\dots,a_s\in \A$ with
\[
\Bigl\|x-\sum_{k=1}^s z_k a_k\Bigr\|<\varepsilon,
\]
but $F(B)$ is not $\A$-precompact, and, hence, $F$ is not Banach-compact.
\end{teo}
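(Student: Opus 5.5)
We will build the example over the unital commutative algebra $\A=C[0,1]$ with $\cN=\ell_2(\A)$. The idea is to use a single generator $z$ whose coordinates shrink to zero, so that every element of $F(B)$ can be matched by $za$ with $a\in\A$, but only with $\|a\|$ unbounded over $F(B)$. This is exactly the failure of the uniform bound $M_\varepsilon$ in condition a) of Theorem \ref{thm:setcomp}.

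\emph{Data.} Choose pairwise disjoint closed intervals $I_n\subset(0,1]$ accumulating at $0$. Choose continuous $\varphi_n\colon[0,1]\to[0,1]$ supported in $I_n$ with $\|\varphi_n\|=1$. Choose continuous $g_n\colon[0,1]\to[0,1]$ supported in slightly larger, still pairwise disjoint, intervals, with $g_n\equiv1$ on $I_n$. Fix numbers $c_n\searrow 0$. Put
\[
z:=\sum_n e_n c_n\varphi_n\in\ell_2(\A).
\]
Then $\sum_n c_n^2\varphi_n^2$ converges uniformly, because the supports are disjoint and $c_n\to0$, so $z$ is indeed in $\ell_2(\A)$. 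Let $F$ be the diagonal operator
\[
F\Bigl(\sum_n e_n x_n\Bigr)=\sum_n e_n\varphi_n x_n .
\]
It is adjointable and self-adjoint, and $\|F\|\le1$, since pointwise $\sum_n|\varphi_n x_n|^2\le\sum_n|x_n|^2$.

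\emph{Approximation without bounded coefficients.} Since the $\varphi_n$ have disjoint supports, for $y=\sum_n e_n y_n$ with $y_n$ supported in $I_n$ we have
\[
\|y\|=\sup_n\|y_n\|.
\]
Fix $x$ with $\|x\|\le1$ and $\varepsilon>0$. Because $F(x)\in\ell_2(\A)$, we have $\|\varphi_n x_n\|\to0$, so there is $N=N(x)$ with $\sup_{n>N}\|\varphi_nx_n\|<\varepsilon$. Set
\[
a:=\sum_{n\le N}c_n^{-1}g_n x_n\in\A .
\]
This is a finite sum, hence a genuine element of $\A$, although $\|a\|$ may be of order $c_N^{-1}$. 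Since $g_n\equiv1$ on $\operatorname{supp}\varphi_n$ and the supports are disjoint, we get $z a=\sum_{n\le N}e_n\varphi_nx_n$. Therefore
\[
\|F(x)-za\|=\sup_{n>N}\|\varphi_nx_n\|<\varepsilon .
\]
So the condition in the statement holds with $s=1$ and $z_1=z$, for every $\varepsilon$.

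\emph{Failure of $\A$-precompactness.} We have $e_n\in B$ and $F(e_n)=e_n\varphi_n$, which has norm $1$. With $Q_D$ the projection onto the first $D$ coordinates, $\|F(e_n)-Q_DF(e_n)\|=1$ for all $n>D$. By Theorem \ref{thm_old}, since $\A$ is unital and $\cN=\ell_2(\A)$ is standard, $\A$-precompactness of $F(B)$ would give free submodules $L_s$ with $\dist(F(B),L_s)<\varepsilon$. The proof of Theorem \ref{thm_old} shows that one can take $L_s=\Span(e_1,\dots,e_s)$, i.e. uniform smallness of the tails $(1-Q_s)$ on $F(B)$, which is contradicted. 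Equivalently, one can use Theorem \ref{thm:setcomp} b) with the standard basis frame, which is its own canonical dual. Finally, Theorem \ref{thm:opercomp} shows that $F$ is not Banach-compact.

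The main obstacle was the design step: the single approximant $za$ must handle all coordinates $n\le N(x)$ at once, while $N(x)$ is not uniform over $B$. Disjointness of the supports is what makes $\|\cdot\|$ act as a supremum over coordinates. It both keeps the approximation error equal to the tail supremum and forces the tails not to be uniformly small. The remaining verifications are routine.
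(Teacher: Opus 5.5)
Your proof is correct and follows essentially the same route as the paper. Both use a diagonal operator on $\ell_2(\A)$ with disjointly supported norm-one entries and a single generator whose coefficients decay to zero, so the approximating coefficient has norm of order $c_N^{-1}$ (the paper's analogue is $N!$). Both then show that uniform reconstruction fails for the standard basis frame at the elements $F(e_n)$. The only difference is that you work over $C[0,1]$ with bump functions $g_n$, where the paper works over $\A=c\cong C(\mathbb N^+)$ with the minimal projections $\delta^k$; this shows incidentally that the phenomenon does not depend on $\A$ having nontrivial projections.
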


\begin{proof}
Take $\A=c$, the algebra of all convergent sequences with the norm $||a(\cdot)||=\sup\limits_{n\in\mathbb N}{|a(n)|}$, $c\cong C(\mathbb N^+)$, where $\mathbb N^+$ is the one-point compactification of positive integers.
Let $\cN$ be equal to $\ell_2(c)$ --- the standard module over $c$. Note that this module has a standard set of generators, i.e. the standard orthonormal basis $\{e_j\}_{j\in\mathbb N}$, $e_j=(0,\dots,0,1_\A,0,\dots)$, where 
all coordinates except the $j$-th are zero, and the $j$-th coordinate is the identity of $c$, i.e. the sequence identically equal to one.
So for any element $x\in\cN$ we can write $x=\sum\limits_{k\in\mathbb N}e_ka^k$, where $a^k\in c$.
Also denote $\d^j\in c$, $\d^j(j)=1$, $\d^j(i)=0$ for $i\ne j$.

Then define $F(x)=\sum\limits_{k\in\mathbb N}F(e_k)a^k=\sum\limits_{k\in\mathbb N}e_k\d^ka^k$. 
Let us prove that any element of $F(B)$ can be approximated with 
$\A$-multiples of
just one element $v\in F(B)$, 
$v=\sum\limits_{j\in\mathbb N}\frac{1}{j!}e_j\d^j$ (note that $v\d^k=\frac{1}{k!}e_k\d^k$ and $F(v)=v$). 

Indeed, fix $\e>0$ and $y\in F(B)$, $y=\sum\limits_{k\in\mathbb N}e_k\d^ka^k=F(\sum\limits_{k\in\mathbb N}e_ka^k)=F(x)$ for some $x\in B$. In particular, there exists $N\in\mathbb N$ such that 
$$
\left\|\sum\limits_{k=N+1}^\infty e_ka^k\right\|_{\cN}=\sqrt{\left\|\sum\limits_{k=N+1}^\infty (a^k)^*a^k\right\|_\A}=\sqrt{\sup\limits_{n\in\mathbb N}\sum\limits_{k=N+1}^\infty|a^k(n)|^2}<\e.
$$
Then 
\begin{multline*}
\left\|y-\sum\limits_{k=1}^Nv\d^ka^kk!\right\|_\cN=
\left\|\sum\limits_{k=N+1}^\infty e_k\d^ka^k\right\|_\cN
=\sqrt{\sup\limits_{n\in\mathbb N}\sum\limits_{k=N+1}^\infty\d^k(n)|a^k(n)|^2}\\
\le
\sqrt{\sup\limits_{n\in\mathbb N}\sum\limits_{k=N+1}^\infty|a^k(n)|^2}
<\e.
\end{multline*}
In particular, $F(\cN)$ is generated by just one element $v$ (topologically, i.e. $F(\cN)=\overline{\Span_\A(v)}$, not
algebraically). 

To verify that 
$F(B)$ is not $\A$-precompact, consider the orthogonal basis $\{e_n\}_{n\in\mathbb N}$ as a standard frame (which coincides with its canonical dual since it is a Parseval frame). 
For every $n\in\mathbb N$, take $x=e_{n+1}\d^{n+1}=F(e_{n+1})$ to estimate:
$$
\sup\limits_{x\in F(B)}\Bigl\|x-\sum_{j=1}^n e_j\< e_j,x\>\Bigr\|\ge
||e_{n+1}\d^{n+1}-\sum_{j=1}^n e_j\< e_j,e_{n+1}\d^{n+1}\>||=||e_{n+1}\d^{n+1}||=1.
$$ 
Thus, this value does not tend to zero uniformly on $F(B)$, so $F(B)$ is not $\A$-precompact.
\end{proof}

\end{document}